\newcommand{\C}{\mathbb C}
\newcommand{\R}{\mathbb R}
\newcommand{\B}{\mathbb B}
\newcommand{\transp}{\,^t}
\newcommand{\aaa}{\mathfrak{(a)}}
\newcommand{\fff}{\mathfrak{(f)}}
\newcommand{\ddd}{(\mathfrak{d})}
\newcommand{\bbb}{\mathfrak{(b)}}
\newcommand{\ttt}{\mathfrak{(t)}}
\newcommand{\vnorm}[1]{{\| #1 \|}}
\DeclareMathOperator{\spanc}{span}
\newtheorem{theo}{Theorem}[section]
\newtheorem{lemma}[theo]{Lemma}
\newtheorem{prop}[theo]{Proposition}
\theoremstyle{remark}
\newtheorem{remark}[theo]{Remark}
\theoremstyle{example}
\theoremstyle{definition}
\newtheorem{defi}[theo]{Definition}
\numberwithin{equation}{section}
\begin{document}

\begin{abstract}
We discuss the links between stationary discs, the defect of analytic discs, and $2$-jet determination of CR automorphisms of generic nondegenerate  real submanifolds of $\C^N$ of 
class $\mathcal{C}^4$.

\end{abstract}

\author{Florian Bertrand  and Francine Meylan}
\title[Nondefective stationary discs and $2$-jet determination
]{Nondefective stationary discs and $2$-jet  determination in higher codimension
}

\subjclass[2010]{}

\keywords{}

\maketitle 


\section*{Introduction}
In his important paper \cite{tu0}, Tumanov introduced the notion of defect of an analytic disc  $f$ attached to a generic submanifold $ M \subset \Bbb C^N,$  which was defined equivalently by Baouendi, Rothschild and Tr\' epreau \cite{ba-ro-tr}   as  the dimension of the  real vector space of all holomorphic lifts  $\bm {f}$ in $T^*(\Bbb C^N)$ of $f$ attached to the conormal bundle  $N^*(M)$. 
In particular, Tumanov  proved that the  existence of  {\it nondefective}  analytic discs, that is of defect $0$, attached to $M$ implies the wedge extendability of CR functions. In the present paper, we introduce  a  new notion of nondegeneracy of the Levi map which expresses the existence of a nondefective {\it stationary}  disc attached to the {\it  quadric  model} of $M$. Using a deformation argument developed by Forstneri\v{c} \cite{fo} and Globevnik \cite{gl1}, we produce  a family of stationary discs near that  nondefective disc, which are uniquely determined by their  $1$-jet and which cover an open set  of $M.$ 
As an application of this theory, we deduce a $2$-jet determination for CR automorphisms of our generic submanifold $M.$

\begin{theo}\label{chloe}  
Let $M\subset \C^{N}$ be a $\mathcal{C}^4$ generic  real submanifold.    
We assume that $M$ is \ $\mathfrak{D}$-nondegenerate at $p \in M.$  Then any germ  at $p$ of  CR automorphism  of $M$ of class $\mathcal{C}^3$   is  uniquely determined by its $2$-jet at $p.$     
\end{theo}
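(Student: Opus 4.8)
The plan is to deduce the statement from the family of nondefective stationary discs supplied by $\mathfrak{D}$-nondegeneracy, together with the way stationary discs and their conormal lifts transform under biholomorphisms. Since $M$ is $\mathfrak{D}$-nondegenerate, a germ at $p$ of $\mathcal C^3$ CR automorphism of $M$ extends to a germ at $p$ of biholomorphism of $\C^N$ preserving $M$, and I identify it with that extension. If $G_1,G_2$ are two such automorphisms with $j^2_pG_1=j^2_pG_2$, I set $F:=G_2^{-1}\circ G_1$; then $F$ is a germ of biholomorphism preserving $M$ with $F(p)=p$, $d_pF=\mathrm{id}$, and vanishing second-order part $d^2_pF=0$ (the $2$-jet of a composition depends only on the $2$-jets of its factors). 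So it suffices to prove: if $F$ is a germ at $p$ of biholomorphism preserving $M$ with $j^2_pF=j^2_p\mathrm{id}$, then $F=\mathrm{id}$ near $p$.

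Now recall what $\mathfrak{D}$-nondegeneracy provides: a nondefective stationary disc $f_0$ attached to $M$, which I normalize so that $f_0(1)=p$, together with a smooth family $\mathcal F$ of stationary discs attached to $M$ near $f_0$, each carrying a holomorphic lift $\widehat f=(f,\widetilde f)$ to $T^*\C^N$ attached to the conormal bundle $N^*(M)$; these lifted discs are in bijection with their $1$-jet at $1$, through an open set of admissible $1$-jets, and their boundaries $f(\partial\Delta)$ cover an open subset of $M$. The only facts I need about the biholomorphism $F$ are: first, for $f\in\mathcal F$, the disc $F\circ f$ together with the lift $\Lambda_F\circ\widehat f$ is again an element of $\mathcal F$ when $F$ is close to $\mathrm{id}$, where $\Lambda_F(z,w)=\bigl(F(z),\,w\,(d_zF)^{-1}\bigr)$ is the cotangent lift of $F$ — this is simply the covariance of the conormal bundle, $N^*_{F(q)}(M)=N^*_q(M)\circ(d_qF)^{-1}$; second, that the $1$-jet of the biholomorphism $\Lambda_F$ of $T^*\C^N$ at a point $(q,w)$ lying over $M$ is governed by the $2$-jet $j^2_qF$, because $d_{(q,w)}\Lambda_F$ involves both $d_qF$ and the Hessian $d^2_qF$.

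The heart of the argument is then a short computation along the discs of $\mathcal F$ based at $p$. Pick any $f\in\mathcal F$ with $f(1)=p$, so that $\widehat f(1)=(p,\widetilde f(1))$. Because $F(p)=p$ and $d_pF=\mathrm{id}$, $\Lambda_F$ fixes every point $(p,w)$; because in addition $d^2_pF=0$, the differential $d_{(p,w)}\Lambda_F$ is the identity — its only possibly nonzero off-diagonal block, namely $-\,w\,d^2_pF$, vanishes. Hence $\Lambda_F$ has the same $1$-jet as the identity at $\widehat f(1)$, so $\Lambda_F\circ\widehat f$ has the same $1$-jet at $1$ as $\widehat f$; by uniqueness of the element of $\mathcal F$ with a prescribed $1$-jet at $1$, $\Lambda_F\circ\widehat f=\widehat f$, and in particular $F\circ f=f$. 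Thus $F$ fixes pointwise every stationary disc in $\mathcal F$ passing through $p$. It is precisely here that the full $2$-jet, and not merely $d_pF=\mathrm{id}$, is used: without $d^2_pF=0$ the term $-\,\widetilde f(1)\,d^2_pF$ would move the $\widetilde f\,'(1)$-component of the $1$-jet and produce a different lifted disc — which is the structural reason the determination is by $2$-jets rather than $1$-jets.

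To conclude, the boundaries of the discs $f\in\mathcal F$ with $f(1)=p$ should sweep out an open subset $V$ of $M$; a dimension count leaves ample room for this (the sub-family of discs based at $p$ has real dimension $2N+d$, whereas $\dim_\R M=2N-d$), so it should follow from the submersivity built into the construction, now restricted to this sub-family. On $V$ one has $F=\mathrm{id}$, and since $M$ is generic — hence a uniqueness set for holomorphic functions — the holomorphic map $F$ must equal the identity near $p$; equivalently, $\Lambda_F$ is the identity on an open piece of the maximally totally real submanifold $N^*(M)\subset T^*\C^N$, hence $\Lambda_F=\mathrm{id}$ and $F=\mathrm{id}$. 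I expect the main obstacle to be exactly this last step: the covering property is stated for the full family, in which the base point $f(1)$ is free, so one must check that the relevant evaluation map is still a submersion once the base point is pinned at $p$. A secondary point needing care is the holomorphic extension of $\mathcal C^3$ CR automorphisms, which I take for granted from the nondegeneracy hypothesis.
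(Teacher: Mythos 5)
Your overall strategy coincides with the paper's: reduce to an automorphism $F$ with trivial $2$-jet at $p$, pass to the cotangent lift $F_*\bm{f}=\left(F\circ f,\tilde f\,(d_{f}F)^{-1}\right)$, observe that $F_*\bm{f}$ and $\bm{f}$ have the same $1$-jet at $\zeta=1$ and therefore coincide by the $1$-jet rigidity of the family (Proposition \ref{propjetdiscs}), and conclude from a covering property. Your computation that $d_{(p,w)}\Lambda_F=\mathrm{id}$ once $j^2_pF=j^2_p\,\mathrm{id}$, and your identification of the off-diagonal block $-w\,d^2_pF$ as the structural reason the determination is by $2$-jets rather than $1$-jets, are exactly the content of the paper's proof of Theorem \ref{theojet}.

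The genuine gap is the covering step, and it is precisely the one you flag as the ``main obstacle.'' In this construction \emph{every} disc of the normalized family $\mathcal S_0$ satisfies $\bm f(1)=(0,0,0,c/2)$: the evaluation at the base point $\zeta=1$ is constant over the whole family, so the differential of the evaluation map $(\bm f,\theta)\mapsto e^{-i\theta}.\bm f(e^{i\theta})$ with respect to the disc parameters vanishes identically at $\theta=0$. There is therefore no ``submersivity built into the construction'' to restrict to the sub-family through $p$ (which is the whole family here), and a dimension count cannot repair a map whose differential has rank at most one along $\{\theta=0\}$. The paper's Proposition \ref{propfill} handles this by writing the evaluation map in the degenerate form $\Psi(x,\theta)=\theta B(x,\theta)$ and showing that the image of such a map still contains an open set provided $B(\cdot,0)$ is a local diffeomorphism (Lemmas \ref{bog} and \ref{lemopen}, a degree-theoretic argument modeled on Lemma 3, Section 15.5 of \cite{Bo}); the needed nondegeneracy of $B(\cdot,0)=\bm f\mapsto (if'(1),i\tilde f'(1))$ is exactly where the injectivity of the $1$-jet map, i.e.\ $\mathfrak{D}$-nondegeneracy, enters a second time. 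Note also that what is covered is an open set of the conormal bundle $N^*M$, which is maximally totally real near $\bm{f_0}(1)$ by condition $\ttt$ (Proposition \ref{propco}), so the paper concludes that the cotangent lift of $F$ fixes an open piece of a maximal totally real submanifold and is therefore the identity. Your variant --- projecting down to an open subset of $M$ and invoking genericity of $M$ as a uniqueness set --- works equally well once the conormal-bundle covering is established, since the bundle projection is open; but the covering itself is the missing ingredient, and it cannot be obtained by the submersion argument you propose. (Your other flagged point, the holomorphic extension of the $\mathcal C^3$ CR automorphism needed to make sense of $F\circ f$ on $\Delta$, is likewise suppressed in the paper's sketch, which defers to \cite{be-bl-me}.)
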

We refer to Definition \ref{defect} for the notion of  {\it  $\mathfrak{D}$-nondegenerate} submanifold; this notion is closely related to the existence of a nondefective stationary disc (see Lemma \ref{clio}).
In the previous paper by the authors and Blanc-Centi \cite{be-bl-me},  $2$-jet determination  is obtained under the more restrictive  assumption that $M$ is {\it fully nondegenerate} (see Definition 1.2 in \cite{be-bl-me}). Indeed, while fully nondegeneracy imposes the codimension restriction $d \le n,$ $\mathfrak{D}$-nondegeneracy  requires  $d \le 2n.$
For instance, the quadric 
\begin{equation*}
\begin{cases}
\Re e  w_1=|z_1|^2\\
\Re e  w_2=|z_2|^2\\
\Re e  w_3=z_1\overline{z_2}+\overline{z_1}z_2\\
\end{cases}
\end{equation*}
and its perturbations are $\mathfrak{D}$-nondegenerate but not fully nondegenerate.

Stationary discs were introduced by Lempert \cite{le} in his work on the Kobayashi metric of strongly convex domains and studied further in \cite{hu, pa, tu, su-tu}.   
 Their  use in the finite jet determination problem in the framework of finitely smooth submanifolds has been recently developed by several authors \cite{be-bl, be-de-la,be-bl-me,tu3}.   Finite jet determination problems for {\it real analytic} or  $\mathcal{C}^\infty$ real  submanifolds has attracted  considerable attention.
We know from results of Cartan \cite{eca}, Tanaka \cite{ta}, Chern and Moser  
\cite{ch-mo}, that germs of  CR automorphism of a real-analytic Levi nondegenerate 
hypersurface $M$ in $\C^N$ are uniquely determined by their $2$-jet at $p \in M$.
 We also refer  for instance to the articles of Zaitsev \cite{za}, Baouendi, Ebenfelt and Rothschild \cite{BER1}, Baouendi, Mir and Rothschild \cite{BMR}, Ebenfelt, Lamel and Zaitsev \cite{eb-la-za},  Lamel and Mir, \cite{la-mi}, Juhlin \cite{ju}, Juhlin and Lamel \cite{ju-la}, Mir and Zaitsev \cite{mi-za} in the real analytic framework. We point out the works of Ebenfelt \cite{eb}, Ebenfelt and Lamel \cite{eb-la}, and Kim and Zaitsev \cite{ki-za}, Kolar, Zaitsev and the third author \cite{KMZ1},  in the
$ \mathcal{C}^\infty$ setting.

During the completion of this  work, we received a preprint by Tumanov \cite{tu3}, where $2$-jet determination of CR automorphisms is obtained for  $\mathcal{C}^4$ generic strongly pseudoconvex Levi generating submanifold. His approach is based on the stationary discs method in the jet determination problem \cite{be-bl, be-de-la,be-bl-me}, and although apparently similar, his result and ours are in different settings of generic submanifolds of higher codimension. For instance, the quadric 
\begin{equation*}
\begin{cases}
\Re e  w_1=|z_1|^2-|z_2|^2\\
\Re e  w_2=|z_3|^2\\
\end{cases}
\end{equation*}
is $\mathfrak{D}$-nondegenerate but not strictly pseudoconvex. Nevertheless, the result of Tumanov and ours are a step towards the settlement of the following conjecture:

\vspace{0.3cm}
\noindent {\bf Conjecture.} {\it Consider a $\mathcal{C}^4$ generic  real submanifold $M$ Levi nondegenerate in the sense of Tumanov and admitting a nondefective stationary disc passing through  $p \in M$. Then  any germ  at $p$ of  CR automorphism  of $M$ of class $\mathcal{C}^3$ is  uniquely determined by its $2$-jet $p.$}    
\vspace{0.3cm}

We would like to point out that, recently, the second author \cite{me} gave an example of a quadric in $\C^9$ of finite type with $2$ the only H\"ormander number and Levi nondegenerate in the sense of Tumanov, for which the 
$2$-jet determination for biholomorphisms does not hold. Therefore, even in the real analytic case, the question of an optimal bound for the jet determination for nondegenerate (in the sense of Definition \ref{defnondegbe}) submanifolds remains open.
\vspace{0.3cm}

The paper is organized as follows. In Section $1$ and Section $2$, we introduce the setting of the work and the relevant notions; in particular we connect $\mathfrak{D}$-nondegeneracy with the construction of a nondefective stationary disc (Lemma \ref{clio}). In Section 2, we first recall the main result on the existence of stationary discs (Theorem \ref{theodiscscons}) and we prove that stationary discs near an initial nondefective disc are uniquely determined by their  $1$-jet (Proposition \ref{propjetdiscs}) and cover an open set in the conormal bundle (Proposition \ref{propfill}). This last result is particularly important and differs with the usual results in the theory, where it is proved that centers of such discs cover an open sets  in either $T^*\C^N$ or $C^N$. Finally we prove Theorem \ref{chloe}. The paper also contains an appendix on a more general result on  jet determination for stationary discs.

\section{Preliminaries}

We denote by $\Delta$ the unit disc in $\C$, by $\partial \Delta$ its boundary,  and by $\B\subset \C^N$ the unit ball in $\C^N$.      

\subsection{Nondegenerate generic real submanifolds} 
Let $M \subset \C^{N}$ be a  $\mathcal{C}^{4}$ generic   real submanifold of real codimension $d\ge 1$ through $p$. 
After a local biholomorphic change of coordinates, we may assume that $p=0$ and that $M\subset \C^N=\C^n_z\times\C^d_w$ is given locally by the following system of equations
\begin{equation}\label{eqred}
\begin{cases}
r_1= \Re e  w_1- \transp\bar z A_1 z+ O(3)=0\\
\ \ \ \ \vdots \\
r_d=\Re e  w_d- \transp\bar z A_d z+ O(3)=0
\end{cases}
\end{equation}
where $A_1,\hdots,A_d$ are Hermitian matrices of size $n$. In the remainder O(3), $z$ is of weight one and $\Im m w$ of weight two. We set $r=(r_1,\ldots,r_d)$.

We  recall the following biholomorphic invariant notions  introduced by Beloshapka in \cite{be1} and by the authors and Blanc-Centi in \cite{be-bl-me} that coincide in the hypersurface case and in case $N=4$.  
\begin{defi} [Beloshapka \cite{be1}]\label{defnondegbe}
A $\mathcal{C}^{4}$ generic  real submanifold $M$ of $\C^N$ of codimension $d$ and given by (\ref{eqred}) is  \emph{Levi nondegenerate} at 0 in the sense of Beloshapka if the following two conditions are both satisfied
\begin{center}
\begin{tabular}{cl}
$\aaa$ & $A_1$,...,$A_d$ are linearly independent (equivalently on $\R$ or $\C$),\\
\\
$\bbb$ & $\bigcap_{j=1}^d\mathrm{Ker}A_j=\{0\}$.
\end{tabular}
\end{center}
\end{defi}

\begin{defi} [\cite{be-bl-me}]\label{deffully} 
A  $\mathcal{C}^{4}$ generic real submanifold $M$ of $\C^{N}$ of codimension $d$ given by  (\ref{eqred}) is \emph{fully nondegenerate} at 0 if the following two conditions are  satisfied    
\begin{center}
\begin{tabular}{cl}
$\ttt$ & there exists a real linear combination $A=\sum_{j=1}^d c_jA_j$ that is invertible.\\
\\
$\fff$ & there exists $V \in \C^{n}$ such that if we set $D$ to be the $n \times d$ matrix whose $j^{th}$ column is \\

&  $A_jV$, and $B=\transp \overline D A^{-1}D$, then 
$B$ and $\Re e B$ are invertible. 
\\
\end{tabular}
\end{center}
\end{defi}
We point out that condition $\aaa$  implies  a restriction on the codimension, namely $d \leq n^2$, and that condition $\fff$ implies  $d \leq n.$ 
In the present paper, we introduce a less stringent notion of nondegeneracy by removing the assumption on the invertibility of $B$, which will allow us to work with codimension $d\le 2n.$ 
\begin{defi}\label{defect} 
A  $\mathcal{C}^{4}$ generic real submanifold $M$ of $\C^{N}$ of codimension $d$ given by  (\ref{eqred}) is \emph{$\mathfrak{D}$-nondegenerate} at 0 if the following two conditions are  satisfied    
\begin{center}
\begin{tabular}{cl}
$\ttt$ & there exists a real linear combination $A=\sum_{j=1}^d c_jA_j$ that is invertible.\\
\\
$\ddd$ &there exists $V \in \C^{n}$ such that if we set $D$ to be the $n \times d$ matrix whose $j^{th}$ column is \\

&  $A_jV$, and $B=\transp \overline D A^{-1}D$, then $\Re e B$ is invertible. 

\end{tabular}
\end{center}
\end{defi}
We point out that condition $\ddd$ implies that  ${\rm span}_{\R}\{A_1V,\ldots,A_dV\}$ is of dimension $d$, and the converse holds in the strictly pseudoconvex case, that is, when $A$ is positive definite. Recall that condition $\ttt$ was introduced by Tumanov \cite{tu} and is essential for the construction of stationary disc; we say that {\it $M$ is Levi nondegenerate at $0$ 
 in the sense of Tumanov}  if it satisfies conditon $\ttt$.
 
\subsection{Spaces of functions}
We introduce the spaces of functions we need. 
Let $k \geq 0$ be an integer and let $0< \alpha<1$.
We denote by $\mathcal C^{k,\alpha}=\mathcal C^{k,\alpha}(\partial\Delta,\R)$ the space of real-valued functions  defined on $\partial\Delta$ of class 
$\mathcal{C}^{k,\alpha}$. The space  $\mathcal C^{k,\alpha}$ is endowed with  its usual norm
$$\|f\|_{\mathcal{C}^{k,\alpha}}=\sum_{j=0}^{k}\|f^{(j)}\|_\infty+
\underset{\zeta\not=\eta\in \partial\Delta}{\mathrm{sup}}\frac{\|f^{(k)}(\zeta)-f^{(k)}(\eta)\|}{|\zeta-\eta|^\alpha},$$
where $\|f^{(j)}\|_\infty=\underset{\partial\Delta}{\mathrm{max}}\|f^{(j)}\|$.
We set $\mathcal C_\C^{k,\alpha} = \mathcal C^{k,\alpha} + i\mathcal C^{k,\alpha}$ and we equip this space with the norm
$$\|f\|_{\mathcal{C}_{\C}^{k,\alpha}}=
\|\Re e  f\|_{\mathcal{C}^{k,\alpha}}+\|\Im m f\|_{\mathcal{C}^{k,\alpha}}.$$ 
We denote by $\mathcal A^{k,\alpha}$ the subspace of {\it analytic discs} in $\mathcal C_{\C}^{k,\alpha}$ consisting of functions $f:\overline{\Delta}\rightarrow \C$, holomorphic on $\Delta$ with trace on 
$\partial\Delta$ belonging to $\mathcal C_\C^{k,\alpha}$. 
We now denote by $\mathcal A^{1,\alpha}_{0}$ the subspace of $\mathcal C_{\C}^{1,\alpha}$ of functions of the form $(1-\zeta) f$, with  
$f\in \mathcal A^{1,\alpha}$ and  we equip 
$\mathcal A^{1,\alpha}_{0}$ with the norm 
\begin{equation*}
\|(1-\zeta) f\|_{\mathcal A^{1,\alpha}_{0}}
=\vnorm{ f }_{\mathcal{C}_{\C}^{1,\alpha}}
\end{equation*}
which makes it  a Banach space and isomorphic to $\mathcal A^{1,\alpha}$.  We also denote by 
$\mathcal C_{0}^{1,\alpha}$ the subspace of $\mathcal C^{1,\alpha}$ of functions of the form  $(1-\zeta) v$
 with $v\in \mathcal C_\C^{1,\alpha}$. The space $\mathcal C_{0}^{1,\alpha}$ is equipped with the norm
$$\|(1-\zeta) f\|_{\mathcal C_{0}^{1,\alpha}}=\vnorm{ f }_{\mathcal C_\C^{1,\alpha}}.$$ 
Note that $\mathcal C_{0}^{k,\alpha}$ is a Banach space.

\section{Stationary discs and the defect}

\subsection{Stationary discs}

 Let $M$ be a $\mathcal{C}^{4}$ generic  real submanifold of $\C^N$ of codimension $d$ given by  (\ref{eqred}). An analytic disc $f \in (\mathcal A^{k,\alpha})^{N}$ 
is {\it attached to  $M$} if $f(\partial \Delta) \subset M$. Following Lempert \cite{le} and Tumanov \cite{tu} we define
\begin{defi}
A holomorphic disc $f: \Delta \to \C^N$ continuous up to  $\partial \Delta$ and attached to  $M$ is {\it stationary for $M$} if there 
exists a  holomorphic lift $\bm{f}=(f,\tilde{f})$ of $f$ to the cotangent bundle $T^*\C^{N}$, continuous up to 
 $\partial \Delta$ and such that for all $\zeta \in \partial\Delta,\ \bm{f}(\zeta)\in\mathcal{N}M(\zeta)$
where
\begin{equation*}
\mathcal{N}M(\zeta):=\{(z,w,\tilde{z},\tilde{w}) \in T^*\C^{N} \ | \ (z,w) \in M, (\tilde{z},\tilde{w}) \in 
\zeta N^*_{(z,w)} M\setminus \{0\} \},
\end{equation*}
and where 
$$N^*_{(z,w)} M=\spanc_{\R}\{\partial r_1(z,w), \ldots, \partial r_d(z,w)\}$$ is the conormal fiber at $(z,w)$ of $M$. 
The set of these lifts $\bm{f}=(f,\tilde{f})$, with $f$ non-constant, is denoted by $\mathcal{S}(M)$.
\end{defi}
Note that equivalently, an analytic disc $f \in (\mathcal A^{k,\alpha})^{N}$ attached to  $M$ is stationary for $M$ if there exists $d$ real valued functions $c_1, \ldots, c_d : \partial \Delta \to \R$ such that $\sum_{j=1}^dc_j(\zeta)\partial r_j(0)\neq 0$ for all $\zeta \in \partial \Delta$   and such that the map 
$$\zeta \mapsto \zeta \sum_{j=1}^dc_j(\zeta)\partial r_j(f(\zeta), \overline{f(\zeta)})$$ defined on $\partial \Delta$ extends holomorphically on $\Delta$.

The set of such small discs is invariant under CR automorphisms. Recall the following essential result due to Webster \cite{we} in the hypersurface case and to Tumanov \cite{tu} in higher codimension case.
\begin{prop}[\cite{tu}]\label{propco}
 A generic  real submanifold $M$ given by  (\ref{eqred}) satisfies $\ttt$  if and only the conormal bundle $N^*M$ is totally real at $\left(0,\sum_{j=1}^dc_j\partial r_j(0)\right)$, where the $c_1,\ldots,c_d$ are such that $\sum_{j=1}^d c_jA_j$  is invertible.
 \end{prop}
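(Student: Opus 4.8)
To prove Proposition~\ref{propco} I would establish the more precise statement that, for \emph{any} real numbers $c_1,\dots,c_d$, the conormal bundle $N^*M$ is totally real at $\bigl(0,\sum_{j=1}^d c_j\partial r_j(0)\bigr)$ if and only if $C:=\sum_{j=1}^d c_jA_j$ is invertible; the asserted equivalence then follows immediately, since $\ttt$ is by definition the existence of such $c_j$ with $C$ invertible, and any such tuple is nonzero so that $\sum_j c_j\partial r_j(0)\neq 0$ is a genuine point of $N^*M$. I identify $T^*\C^N$ with $\C^{2N}_{(z,w,\tilde z,\tilde w)}$ by writing a covector as $\tilde z\,dz+\tilde w\,dw$. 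From \eqref{eqred} one has $\partial r_j(0)=\tfrac12 dw_j$, so the point in question is $q=(0,0,0,\tfrac12 c)$. Since $\partial r_1(0),\dots,\partial r_d(0)$ are linearly independent, near $q$ the conormal bundle is the image of the embedding
\[
(z,s,\lambda)\ \longmapsto\ \Bigl(z,\ w(z,s),\ \sum_{j=1}^d\lambda_j\,\partial r_j\bigl(z,w(z,s)\bigr)\Bigr),\qquad (z,s,\lambda)\in\C^n\times\R^d\times\R^d,
\]
defined near $(0,0,c)$, where $w(z,s)$ is the solution of $r=0$ with $\Imm w=s$; so the whole question reduces to computing the image of the differential of this map at $(0,0,c)$.

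Differentiating, and using that the $z$-differential of $w$ vanishes at $(0,0)$, that the first derivatives of the $O(3)$ remainder vanish at $0$, and that each $A_j$ is Hermitian, one obtains that $T_qN^*M$ is spanned by three blocks
\[
V_1=\{(\dot z,\,0,\,-\overline{C\dot z},\,\ast)\,:\,\dot z\in\C^n\},\quad
\widetilde V_2=\{(0,\,it,\,\ast,\,\ast)\,:\,t\in\R^d\},\quad
V_3=\{(0,\,0,\,0,\,\mu)\,:\,\mu\in\R^d\},
\]
where $it:=(it_1,\dots,it_d)$ and each symbol $\ast$ stands for an $\R$-linear function of $\dot z$, resp.\ of $t$, built from the second derivatives of $r$ at $0$ --- this is where the $O(3)$ part of $M$ would enter. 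The conjugate in $V_1$ appears because $\partial r_j$ depends antiholomorphically on $z$ through $\transp\bar zA_jz$; the $it$ in $\widetilde V_2$ reflects $T_0M=\C^n\times i\R^d$; and $V_3$ is the conormal (fibre) direction. Looking successively at the $z$-, $w$- and $\tilde z$-components shows these three blocks are independent, so $T_qN^*M=V_1\oplus\widetilde V_2\oplus V_3$ has real dimension $2n+2d=2N$, as it should.

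It remains to compute $T_qN^*M\cap i\,T_qN^*M$. If $u$ lies in this intersection, write $u=v_1+v_2+v_3$ along the three blocks and also $u=i(v_1'+v_2'+v_3')$. Comparing $z$-components gives $\dot z=i\dot z'$; comparing $w$-components gives $it=-t'$ with $t,t'\in\R^d$, forcing $t=t'=0$, so the $\widetilde V_2$-blocks drop out; comparing $\tilde z$-components then gives $-\overline{C\dot z}=-i\,\overline{C\dot z'}$, which via $\dot z=i\dot z'$ collapses to $\overline{C\dot z'}=0$, i.e.\ $\dot z'\in\ker C$; and the remaining identity among the $\tilde w$-components can always be solved for the real parameters $\mu,\mu'$ of the $V_3$-blocks. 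Hence $u\mapsto\dot z'$ is a real-linear isomorphism of $T_qN^*M\cap i\,T_qN^*M$ onto $\ker C$, so $N^*M$ is totally real at $q$ precisely when $\ker C=\{0\}$, that is, when $C=\sum_{j=1}^dc_jA_j$ is invertible. I expect the only slightly delicate point to be the verification that the $O(3)$ terms are invisible here, and the reason is exactly this interplay: matching $w$-components eliminates the $\widetilde V_2$ directions (which carry all of those higher-order contributions), and their $\tilde w$-parts get absorbed by the conormal directions $V_3$, so that the surviving constraint only concerns the single Hermitian form $C$; the rest is bookkeeping, the one thing to watch being the Hermitian conjugations. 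Alternatively, the same computation shows $T_qN^*M$ depends only on the quadric model of $M$, so one may simply replace $M$ by its quadric and carry out the linear algebra there.
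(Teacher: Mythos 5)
Your argument is correct, and there is in fact nothing in the paper to compare it with: Proposition~\ref{propco} is quoted from Tumanov \cite{tu} (Webster \cite{we} in the hypersurface case) and no proof is given in the text. Your computation is a sound, self-contained derivation, and it establishes the sharper statement that for an \emph{arbitrary} $c\in\R^d$ the conormal bundle is totally real at $\left(0,\sum_j c_j\partial r_j(0)\right)$ exactly when $C=\sum_j c_jA_j$ is invertible, from which the stated equivalence with $\ttt$ is immediate. The two points that carry the proof are precisely the ones you isolate: matching the $w$-components forces $t=t'=0$, which eliminates the $\Imm w$-directions and with them every contribution of the $O(3)$ remainder (what survives in the $\tilde w$-slot is absorbed by the fibre directions $V_3$); and the Hermitian symmetry $\transp C=\overline{C}$ is what turns the differential of $z\mapsto -\transp\bar z\,C$ into $\dot z\mapsto -\overline{C\dot z}$, so that the surviving constraint is exactly $C\dot z'=0$ and $T_qN^*M\cap i\,T_qN^*M\cong\ker C$. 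One small caveat: your closing alternative remark that ``the same computation shows $T_qN^*M$ depends only on the quadric model of $M$'' is not literally true --- the $\tilde w$-component of $V_1$ and the $\tilde z$-, $\tilde w$-components of $\widetilde V_2$ do involve the weight-three coefficients of $r$ (for instance the $z\,\Imm w$ terms), and only the intersection $T_qN^*M\cap i\,T_qN^*M$ is insensitive to them, which is what your main computation already shows; that aside should be dropped or rephrased, but the principal argument does not rely on it.
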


\subsection{Nondefective stationary discs} 

We recall the notion of  defect of an analytic disc, and more precisely of a stationary disc. Following \cite{ba-ro-tr}, we say that a stationary disc $f$ is {\it defective} if it admits a lift 
$\bm{f}=(f,\tilde{f}): \Delta \to T^*\C^N$ such that  $\displaystyle 1/\zeta.\bm{f}=(f,\tilde{f}/\zeta)$ is holomorphic on $\Delta$. The disc is  {\it nondefective} if it is not defective. We investigate the notion of defective disc in case of a quadric. 
 Consider a quadric submanifold $Q\subset \C^N=\C^n_z\times\C^d_w$ of real codimension $d$ given by
\begin{equation}\label{eqquadric}
\begin{cases}
\rho_1= \Re  e w_1- \transp\bar z A_1 z = 0\\
\ \ \ \ \vdots \\
\rho_d=\Re e w_d - \transp\bar z A_d z = 0 
\end{cases}
\end{equation}
where $A_1,\ldots,A_d$ are hermitian matrices of size $n$. We set $\rho=(\rho_1,\ldots,\rho_d)$. In that context, a stationary disc $f$ for $Q$ is {\it defective} if there exists $c=(c_1,\ldots,c_d) \in \R^d\setminus\{0\}$ such that the map $\zeta \mapsto c\partial_z \rho(f(\zeta))= \sum_{j=1}^d c_j \partial_z \rho_j(f(\zeta))$ defined on $\partial \Delta$ extends holomorphically on $\Delta$. The latter  relates  with Tumanov's equivalent  definition of the defect in \cite{tu0}.  In \cite{be-bl-me}, we considered a special family of lifts $\bm{f}=(h,g,\tilde{h},\tilde{g}) \in \mathcal{S}(Q)$ of the form
 \begin{equation}\label{eqinit}
 \bm{f}=\left((1-\zeta)V,2(1-\zeta)\transp \overline{V}A_1V,\ldots,2(1-\zeta)\transp \overline{V}A_dV,(1-\zeta)\transp \overline{V}A, \frac{\zeta}{2} c\right),
 \end{equation} 
 where $V\in \C^n$ and    $A:=\sum_{j=1}^d c_jA_j.$ 
The following lemma is a crucial observation, according to which, condition $\ddd$ in  Definition \ref{defect} implies the existence of a  nondefective stationary disc of the form  (\ref{eqinit}).
\begin{lemma}\label{clio}
A stationary disc $f=\left((1-\zeta)V,2(1-\zeta)\transp \overline{V}A_1V,\ldots,2(1-\zeta)\transp \overline{V}A_dV\right)$, where $V\in \C^n$,  for $Q$  is nondefective if and only if $A_1V,\ldots,A_dV$ are $\R$-linearly independent. 
\end{lemma}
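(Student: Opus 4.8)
The plan is to make the defect condition for this particular disc completely explicit, by computing $\partial_z\rho_j$ along $f$ and using the boundary relation $\bar\zeta=1/\zeta$ that holds on $\partial\Delta$.

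First I would note that, with $\rho_j$ as in (\ref{eqquadric}) and $\bar z$ treated as an independent variable, $\partial_z\rho_j=-\transp\bar z A_j$. Along the disc the $z$-component is $(1-\zeta)V$, so $\overline{(1-\zeta)V}=\overline{(1-\zeta)}\,\overline V$; since $\bar\zeta=1/\zeta$ on $\partial\Delta$ one has $\overline{1-\zeta}=-(1-\zeta)/\zeta$, whence
\[ c\,\partial_z\rho(f(\zeta))=\sum_{j=1}^{d}c_j\,\partial_z\rho_j(f(\zeta))=\frac{1-\zeta}{\zeta}\,\transp\overline V A_c,\qquad A_c:=\sum_{j=1}^{d}c_jA_j, \]
for every $c=(c_1,\ldots,c_d)\in\R^d$ and every $\zeta\in\partial\Delta$.

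Next I would observe that $(1-\zeta)/\zeta=\zeta^{-1}-1$, so each component of the right-hand side is a scalar multiple of $\zeta^{-1}-1$ on $\partial\Delta$, and such a function extends holomorphically across $\zeta=0$ exactly when that scalar vanishes. Hence $\zeta\mapsto c\,\partial_z\rho(f(\zeta))$ extends holomorphically to $\Delta$ if and only if $\transp\overline V A_c=0$, which, taking conjugate transpose and using that the $A_j$ are Hermitian, is equivalent to $A_cV=\sum_{j=1}^{d}c_jA_jV=0$. Comparing with the characterization of defective discs for quadrics recalled just above the statement — and noting that the component in the $w$-cotangent directions of any holomorphic lift of $f$ in $\mathcal{S}(Q)$ is forced to be $\tfrac{\zeta}{2}c$ for a constant $c\in\R^d\setminus\{0\}$, being holomorphic and real on $\partial\Delta$, so that the defect is governed solely by the $z$-part — I would then conclude that $f$ is defective if and only if there is $c\in\R^d\setminus\{0\}$ with $\sum_{j=1}^{d}c_jA_jV=0$, i.e. if and only if $A_1V,\ldots,A_dV$ are $\R$-linearly dependent. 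Taking the contrapositive gives the lemma.

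I do not expect a serious obstacle: the argument is a short computation. The two points needing care are the bookkeeping of complex conjugates together with the substitution $\bar\zeta=1/\zeta$ (legitimate only on $\partial\Delta$), and the reduction, mentioned above, of the defect of the lift to the holomorphic extendability of its $z$-part, which is precisely the reformulation of defectiveness for quadrics stated before the lemma.
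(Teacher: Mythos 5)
Your proof is correct and follows essentially the same route as the paper: both reduce defectiveness of this quadric disc to the holomorphic extendability of $\zeta\mapsto c\,\partial_z\rho(f(\zeta))=(\bar\zeta-1)\,\transp\overline{(A_cV)}$ on $\partial\Delta$ and read off the condition $\sum_{j}c_jA_jV=0$. The only cosmetic difference is that for the direction ``dependent $\Rightarrow$ defective'' the paper exhibits the explicit holomorphic lift with vanishing $\tilde z$-component, whereas you run the extendability criterion in both directions (together with the observation, also correct, that a defective lift forces the $\tilde w$-component to be $\tfrac{\zeta}{2}c$ with $c$ constant).
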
 
\begin{proof}
Let $\bm{f}$ be a stationary disc of the form (\ref{eqinit}) and assume that $\sum_{j=1}^d c_jA_jV=0$ for $c=(c_1,\ldots,c_d) \in \R^d\setminus\{0\}$. The disc 
 $$\bm{f}=\left((1-\zeta)V,2(1-\zeta)\transp \overline{V}A_1V,\ldots,2(1-\zeta)\transp \overline{V}A_dV,0, \frac{\zeta}{2} c\right)$$
is a holomorphic lift of $f$, and thus, $f$ is defective. 

Conversely,  assume that  $f=\left((1-\zeta)V,2(1-\zeta)\transp \overline{V}A_1V,\ldots,2(1-\zeta)\transp \overline{V}A_dV\right)$ is defective. It follows that there exists 
$c=(c_1,\ldots,c_d) \in \R^d\setminus\{0\}$ such that the  map 
$$\sum_{j=1}^d c_j \partial_z \rho_j(f(\zeta))=\left(\overline{\zeta}-1\right)\sum_{j=1}^d c_j \transp \overline{V}A_j=\left(\overline{\zeta}-1\right)\sum_{j=1}^d c_j \transp \overline{(A_jV)}$$ 
extends holomorphically on $\Delta$, which can only occur if  $\sum_{j=1}^d c_j \transp \overline{A_jV}=0$.
\end{proof}

\begin{remark}
Note that in the above, we do not assume that $M$ satsifies $\ttt$. 
\end{remark}

\section{Nondefective stationary discs and jet determination}

We consider a quadric submanifold $Q\subset \C^N$ of real codimension $d$  defined by
$\{\rho=0\},$ where $\rho$ is given by
 (\ref{eqquadric}),  satisfying $\ttt$  and an initial  lift of a stationary disc $\bm{f_0}=(h_0,g_0,\tilde{h_0},\tilde{g_0})$ of the form  (\ref{eqinit}) where $c_1,\ldots,c_d$ are chosen such that the matrix $A=\sum_{j=1}^d c_jA_j$ is invertible.
 We define the affine space  $\mathcal{A} \ni \bm{f_0}  $  to be the subset of $(\mathcal{A}^{1,\alpha})^{2N}$ of discs whose value at $\zeta=1$ is $(0,0,0,c/2),$ that is, of  discs  of the form
$$\left((1-\zeta)h,(1-\zeta)g,(1-\zeta)\tilde{h},(1-\zeta)\tilde{g}+\frac{\zeta}{2} c\right).$$
We  set
$$\mathcal{S}_0(M)=\mathcal{S}(M)\cap \mathcal{A} \ni \bm{f_0}$$ 
Let  
$G(\zeta)$ be the  square matrix of size $2N$ given by
\begin{equation*}
G(\zeta)=\left(\begin{array}{cccccccccccc}
\frac{1}{2}I_d & B(\zeta)& 0\\
 0& G_2(\zeta) &  C(\zeta)\\
 0 & 0 &  -i\zeta I_d\\
\end{array}\right), \zeta \in \partial \Delta,
\end{equation*}
where $I_d$ denotes the identity matrix of size $d$, $G_2(\zeta)$ is the following invertible matrix of size $2n$
\begin{equation*}
G_2(\zeta)=\left(\begin{matrix}
\zeta \transp A& I_n &     \\

i\zeta \transp A & -iI_n       \\

\end{matrix}\right),
\end{equation*}
$B(\zeta)$ is the following  matrix of size $d\times 2n$
\begin{equation*}
B=\left(\begin{matrix}
-(A_1)^1 h_0(\zeta)& \hdots  &-(A_1)^n h_0(\zeta) & 0  & \hdots     &  0 \\
\vdots &  & \vdots &  \vdots & & \vdots \\

-(A_d)^1 h_0(\zeta)& \hdots  &-(A_d)^n h_0(\zeta) & 0  & \hdots     &  0 \\

\end{matrix}\right)=(1-\zeta)B_1,
\end{equation*}
and $C(\zeta)$ is the following  matrix of size $2n \times d$ 
$$C=\left(\begin{matrix}
2 \transp h_0 \overline{(A_1)_1} & \hdots  & 2 \transp h_0 \overline{(A_d)_1} \\
\vdots &  & \vdots &    \\
2 \transp h_0 \overline{(A_1)_n} & \hdots  & 2 \transp h_0 \overline{(A_d)_n} \\
-2i \transp h_0 \overline{(A_1)_1} & \hdots  & -2i \transp h_0 \overline{(A_d)_1} \\
\vdots &  & \vdots &    \\
-2i \transp h_0 \overline{(A_1)_n} & \hdots  & -2i \transp h_0 \overline{(A_d)_n} \\

\end{matrix}\right)$$
where $(A_j)_l$ (resp. $(A_j)^l$) denotes, for $j=1,\dots,d$ and $l=1,\ldots,n$, the $l^{\rm th}$ column (resp. row) of $A_j$.
 The following   theorem on the construction of stationary discs with pointwise constraints was obtained in \cite{be-bl-me}. 
 
  \begin{theo}[\cite{be-bl-me}]\label{theodiscscons}
Let $Q\subset \C^{N}$ be a quadric submanifold of real codimension $d$ given by (\ref{eqquadric}) and satisfying satisfying $\ttt$.  Let 
$\bm{f_0}=(h_0,g_0,\tilde{h_0},\tilde{g_0}) \in \mathcal{S}_0(Q)$ of the form  (\ref{eqinit}). Then there exist  open 
neighborhoods $U$ of $\rho$ in $(\mathcal{C}^4(\B))^{d}$  and $V$ of $0$  in $\R^{2N}$, a real number $\varepsilon>0$ and a $\mathcal{C}^1$ map
$$\mathcal{F}_0:U \times V \to  \mathcal{A} $$
 such that:
\begin{enumerate}[i.]

\item $\mathcal{F}_0(\rho,0)=\bm{f_0}$,

\item for all $r\in U$ the map 
$$\mathcal{F}_0(r,\cdot):V\to \{\bm{f} \in \mathcal{S}_0(\{r=0\})\ \ | \  
\|\bm{f}-\bm{f_0}\|_{\mathcal A^{1,\alpha}_{0}}<\varepsilon\}$$
is one-to-one and onto.
\item  the tangent space $T_{\bm {f_0}}\mathcal S_0(Q)$ of  $\mathcal S_0(Q)$ at $\bm {f_0}$  is the kernel of the linear map $$2\Re e \left[\overline{G(\zeta)}\ \cdot \right] : \left(\mathcal{A}^{1,\alpha}_{0}\right)^{2N}
\to  \left(\mathcal C_{0}^{1,\alpha}\right)^{2N}.$$
\end{enumerate}
\end{theo}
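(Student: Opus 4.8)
The plan is to realize $\mathcal{S}_0(\{r=0\})$ near $\bm{f_0}$ as the zero set of a $\mathcal{C}^1$ map between Banach spaces and then apply the implicit function theorem, in the deformation form used by Forstneri\v{c} and Globevnik. Write a disc $\bm f\in\mathcal{A}$ as $\bm f=\bm{f_0}+\bm u$ with $\bm u\in(\mathcal{A}^{1,\alpha}_{0})^{2N}$, and split $\bm f=(f,\tilde f)$ into its base component $f$ and its cotangent component $\tilde f$. Such a $\bm f$ lies in $\mathcal{S}_0(\{r=0\})$ exactly when: $(a)$ $r_j(f(\zeta),\overline{f(\zeta)})=0$ on $\partial\Delta$ for $j=1,\dots,d$; $(b)$ the $\R^d$-valued candidate multiplier read off from the $w$-component of $\tilde f$ is indeed real on $\partial\Delta$; $(c)$ the $z$-component of $\tilde f(\zeta)$ equals $-\zeta$ times the corresponding combination of $\partial_z r_j(f(\zeta),\overline{f(\zeta)})$. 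Collecting $(a)$–$(c)$ defines a $\mathcal{C}^1$ map $\Phi\colon U\times\mathcal{A}\to(\mathcal{C}^{1,\alpha}_{0})^{2N}$ (the $2N$ real boundary functions coming as $d+2n+d$), with $\Phi(\rho,\bm{f_0})=0$ because the disc $(\ref{eqinit})$ is stationary for $Q$.

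Next I would compute the partial differential $D_{\bm f}\Phi(\rho,\bm{f_0})$ along $(\mathcal{A}^{1,\alpha}_{0})^{2N}$. Since the defining functions $\rho_j$ of $Q$ are polynomials of degree $\le 2$, their first and second derivatives at $f_0(\zeta)$ are explicit linear expressions in $h_0(\zeta)=(1-\zeta)V$; differentiating $(a)$–$(c)$ and rewriting the resulting real conditions through the operation $2\Re e[\,\overline{(\cdot)}\,]$ produces exactly the map $\bm u\mapsto 2\Re e[\overline{G(\zeta)}\,\bm u(\zeta)]$, where $G$ has the stated block upper-triangular form: $\tfrac12 I_d$ comes from $\partial_w\rho_j=\tfrac12 e_j$ in $(a)$, the factor $B(\zeta)=(1-\zeta)B_1$ from the Hermitian terms $\transp{\bar z}A_j z$ in $(a)$, the invertible block $G_2(\zeta)$ together with $C(\zeta)$ from linearizing the $z$-component equation $(c)$, and $-i\zeta I_d$ from the reality constraint $(b)$. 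This computation is precisely the content of item (iii), namely $T_{\bm{f_0}}\mathcal{S}_0(Q)=\ker\big(2\Re e[\overline{G(\zeta)}\cdot]\big)$, and it reduces the existence assertion to the mapping properties of this linear operator.

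The analytic heart, and the step I expect to be the main obstacle, is to show that $2\Re e[\overline{G(\zeta)}\cdot]\colon(\mathcal{A}^{1,\alpha}_{0})^{2N}\to(\mathcal{C}^{1,\alpha}_{0})^{2N}$ is surjective with kernel of real dimension $2N$. Because $G$ is block triangular one proceeds block by block: $\tfrac12 I_d$ is constant and invertible, $G_2(\zeta)$ is invertible for every $\zeta\in\partial\Delta$ (its determinant is a nonzero multiple of $\zeta^n$, using that $A$ is invertible), and $-i\zeta I_d$ is diagonal. The decisive input is hypothesis $\ttt$, which by Proposition \ref{propco} is equivalent to $N^*Q$ being totally real at the relevant conormal point; this is what keeps the partial indices of $G$—computed in the spaces $\mathcal{A}^{1,\alpha}_{0}$ and $\mathcal{C}^{1,\alpha}_{0}$ carrying the built-in factor $1-\zeta$—non-negative, so that a Birkhoff factorization $G=\Theta^+\Lambda\Theta^-$ reduces the boundary problem to a diagonal one and yields surjectivity of $2\Re e[\overline{G}\cdot]$. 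The kernel dimension is then read off from the partial indices, and a careful count tracking the $(1-\zeta)$ twist at the marked point $\zeta=1$ (which reflects that a stationary lift may degenerate there) gives exactly $2N$. A secondary technical point is the regularity bookkeeping: one must check that $r\in(\mathcal{C}^4(\B))^d$ is just enough for $\Phi$ to be $\mathcal{C}^1$ with values in $\mathcal{C}^{1,\alpha}$ boundary data.

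Finally, $D_{\bm f}\Phi(\rho,\bm{f_0})$ is surjective with a $2N$-dimensional, hence complemented, kernel; let $E\subset(\mathcal{A}^{1,\alpha}_{0})^{2N}$ be a closed complement. Applying the implicit function theorem to the restriction $\Phi|_{U\times(\bm{f_0}+E)}$ furnishes neighborhoods $U$ of $\rho$ in $(\mathcal{C}^4(\B))^d$ and $V$ of $0$ in the kernel $\cong\R^{2N}$, a number $\varepsilon>0$, and a $\mathcal{C}^1$ map $\mathcal{F}_0\colon U\times V\to\mathcal{A}$ with $\Phi(r,\mathcal{F}_0(r,v))=0$ and $\mathcal{F}_0(\rho,0)=\bm{f_0}$, which is item (i). Since surjectivity of $D_{\bm f}\Phi$ is an open condition and $\Phi$ is $\mathcal{C}^1$, the operator $D_{\bm f}\Phi(r,\bm f)$ remains surjective near $(\rho,\bm{f_0})$, so for each $r\in U$ the set $\{\bm f\in\mathcal{A}:\Phi(r,\bm f)=0,\ \|\bm f-\bm{f_0}\|_{\mathcal{A}^{1,\alpha}_{0}}<\varepsilon\}$ is a graph over $V$; this is item (ii), and (iii) was already obtained in the second step.
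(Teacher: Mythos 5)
This theorem is quoted from \cite{be-bl-me} and the present paper gives no proof of it; your outline --- realizing $\mathcal{S}_0(\{r=0\})$ near $\bm{f_0}$ as the zero set of a nonlinear Riemann--Hilbert operator for discs attached to the (totally real, by $\ttt$ and Proposition \ref{propco}) conormal fibration, linearizing at $\bm{f_0}$ to obtain $2\Re e\left[\overline{G(\zeta)}\,\cdot\,\right]$, and combining a Birkhoff factorization and partial-index count with the implicit function theorem --- is exactly the Forstneri\v{c}--Globevnik scheme that \cite{be-bl-me} follows. The only substantive step you assert rather than carry out is the explicit computation that the partial indices of $G$ in the normalized spaces are all nonnegative (they come out to $0$, $1$ and $2$ for the blocks $\frac12 I_d$, $G_2$ and $-i\zeta I_d$ respectively) and that the resulting kernel has real dimension $2N$; this is the computational core of the cited proof, and your identification of it as the main obstacle is accurate.
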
 
\begin{remark}
We emphasize that in the previous theorem the model quadric is only supposed to satisfy condition $\ttt$. 
\end{remark}
\subsection{Injectivity of the jet map} 
The method used to prove Theorem \ref{theodiscscons} \cite{be-bl-me} based on the computation of certain integers, namely the partial indices and the Maslov index, implies that discs constructed in  Theorem \ref{theodiscscons} are determined by their $k$-jet at $\zeta=1$ , where $k$ is the greatest partial index; in the present case $k=2$. For seek of completeness, we refer to the appendix for a proof of that claim. It is important to note that this result only requires the quadric model to satisfy $\ttt$. In this section, we show that in case the quadric model also satisfies $\ddd$, $1$-jet determination can be achieved. 

Consider the linear jet map 
$$\mathfrak j_{1}: \left(\mathcal{A}^{1,\alpha}\right)^{2N}  \to \mathbb C^{4N}$$ 
mapping ${\bm f}$ to its $1$-jet at $\zeta=1$, namely 
$$\mathfrak j_{1}({\bm f})=\left( {\bm f}(1), \displaystyle \frac{\partial {\bm f}}{\partial \zeta}(1)\right ).$$
\begin{prop}\label{propjetdiscs}
Let $Q\subset \C^{N}$ be a quadric submanifold of real codimension $d$ given by (\ref{eqquadric}), $\mathfrak{D}$-nondegenerate at $0$.  Consider an initial disc $\bm{f_0}  \in \mathcal{S}_0(Q)$ of the form   (\ref{eqinit})
where $V$ and $c=(c_1,\ldots,c_d)$ are respectively given by $\ddd$ and $\ttt$. 
Then there exist an open
neighborhood $U$ of $\rho$ in $(\mathcal{C}^4(\B))^{d}$ and a positive $\varepsilon$ such that for all $r\in U$ the map  $\mathfrak j_{1}$ is injective on 
$$\{\bm{f} \in \mathcal{S}_0(\{r=0\})\ \ | \ \|\bm{f}-\bm{f_0}\|_{1,\alpha}<\varepsilon\}.$$ In other words, such discs are determined by their $1$-jet at $1$.
\end{prop}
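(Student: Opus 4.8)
The plan is to reduce the injectivity of $\mathfrak{j}_1$ on the (finite-dimensional, by Theorem \ref{theodiscscons}) family of nearby stationary discs to an injectivity statement about the linearized problem at $\bm{f_0}$, and then to verify that linear statement using the explicit form of the matrix $G(\zeta)$ together with condition $\ddd$. More precisely, by part (iii) of Theorem \ref{theodiscscons} the tangent space $T_{\bm{f_0}}\mathcal{S}_0(Q)$ is the kernel of the linear Riemann--Hilbert operator $\bm{u}\mapsto 2\Re e[\overline{G(\zeta)}\,\bm{u}]$ on $(\mathcal{A}^{1,\alpha}_0)^{2N}$. Since $\mathcal{S}_0(Q)$ is a $\mathcal{C}^1$ submanifold near $\bm{f_0}$ whose dimension equals that of its tangent space, and since $\mathfrak{j}_1$ is linear, it suffices to prove that $\mathfrak{j}_1$ is injective on $T_{\bm{f_0}}\mathcal{S}_0(Q)$: indeed, if $\mathfrak{j}_1$ restricted to the kernel of the linear map has trivial kernel, then its rank equals $\dim T_{\bm{f_0}}\mathcal{S}_0(Q)$, and an open-mapping/implicit-function argument shows $\mathfrak{j}_1$ stays injective on the nonlinear family $\{\bm{f}\in\mathcal{S}_0(\{r=0\}):\|\bm{f}-\bm{f_0}\|_{1,\alpha}<\varepsilon\}$ for $r$ in a small neighborhood $U$ of $\rho$ (here one uses that the family depends $\mathcal{C}^1$ on $r$ and that injectivity of a linear map on a subspace is an open condition).

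The heart of the matter is thus the linear-algebraic claim: if $\bm{u}\in(\mathcal{A}^{1,\alpha}_0)^{2N}$ satisfies $\Re e[\overline{G(\zeta)}\,\bm{u}(\zeta)]=0$ on $\partial\Delta$ and $\mathfrak{j}_1(\bm{u})=0$ (i.e. $\bm{u}(1)=0$ and $\bm{u}'(1)=0$), then $\bm{u}\equiv 0$. First I would record that membership in $\mathcal{A}^{1,\alpha}_0$ already forces $\bm{u}(1)=0$, so the real content of the $1$-jet condition is $\bm{u}'(1)=0$. Writing $\bm{u}=(1-\zeta)\bm{v}$ with $\bm{v}\in(\mathcal{A}^{1,\alpha})^{2N}$, the condition $\bm{u}'(1)=0$ is equivalent to $\bm{v}(1)=0$. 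Now I would exploit the block-triangular structure of $G(\zeta)$: the bottom $d$ rows give that the last $d$ components of $\bm{u}$ form a $\overline{\,-i\zeta\,}$-Riemann--Hilbert problem, forcing those components to be (essentially) linear in $\zeta$; the middle $2n$ rows, governed by the invertible matrix $G_2(\zeta)$ and the coupling matrix $C(\zeta)$, then determine the $z$-lift components; and the top $d$ rows, governed by $\tfrac12 I_d$ and $B(\zeta)=(1-\zeta)B_1$, close the system. Solving these Riemann--Hilbert problems via their partial indices (which, as the text notes, are $\le 2$, with the worst being $2$) one finds that each solution is a polynomial in $\zeta$ of degree at most two with coefficients constrained by $\Re e$-conditions; condition $\ddd$ — precisely the invertibility of $\Re e\,B$ with $B=\transp\overline{D}A^{-1}D$, equivalently the $\R$-linear independence of $A_1V,\dots,A_dV$ via Lemma \ref{clio} — is what kills the would-be free two-dimensional family of solutions that survives under $\ttt$ alone, cutting the solution space down so that the vanishing of the $1$-jet at $\zeta=1$ forces $\bm{v}(1)=0$ hence $\bm{u}\equiv 0$.

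The main obstacle I anticipate is the explicit Riemann--Hilbert bookkeeping in the middle step: one must carry out the factorization of $\overline{G(\zeta)}$, read off the partial indices and the associated homogeneous solutions, and then see transparently that the ``extra'' solutions present under $\ttt$ (which are what make $2$-jet, rather than $1$-jet, determination the generic outcome in Theorem \ref{theodiscscons} and the appendix) are exactly the ones annihilated when $\Re e\,B$ is invertible. Keeping track of the reality conditions coming from the $2\Re e[\,\cdot\,]$ and of the normalization at $\zeta=1$ simultaneously is where the computation is delicate; I would organize it by first treating the decoupled $w$-lift block $(-i\zeta I_d)$, then substituting into the $G_2$-block, and only at the end using the top block together with $\ddd$ to conclude. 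Once the linear injectivity is in hand, the passage to nearby $r$ and nearby discs is routine by the $\mathcal{C}^1$-dependence in Theorem \ref{theodiscscons} and compactness.
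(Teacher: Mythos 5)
Your overall architecture coincides with the paper's: reduce, via the implicit function theorem and part (iii) of Theorem \ref{theodiscscons}, to showing that the only element of the kernel of $2\Re e\left[\overline{G(\zeta)}\,\cdot\,\right]$ in $(\mathcal{A}^{1,\alpha}_0)^{2N}$ with trivial $1$-jet at $\zeta=1$ is the zero disc, and then use the block-triangular structure of $G$ together with condition $\ddd$ to conclude. That reduction, and your identification of where $\ddd$ must enter, are correct.

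However, the decisive step --- actually solving the linearized Riemann--Hilbert system and exhibiting how the invertibility of $\Re e\, B$ eliminates the residual solutions --- is precisely the part you defer as ``explicit Riemann--Hilbert bookkeeping,'' so as written the proof is not complete. This is not mere bookkeeping: one must verify that $\ddd$, and not the stronger condition $\fff$, suffices. The paper does this by quoting the closed-form description of the kernel from Lemma 3.4 of \cite{be-bl-me}: writing $\bm{f}=(1-\zeta)(h,g,\tilde h,\tilde g)$, one has $\tilde g_j=a_j-\overline{a_j}\zeta$ with $a_j\in\C$, $h=A^{-1}(X+Y\zeta)$ with $X,Y$ affine in the $a_j$, $\overline{a_j}$ and in free real parameters $y_k,\tilde y_k$, and $g$ determined by $X,Y$. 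Imposing the trivial $1$-jet gives $a_j\in\R$, $X=-Y$ and $\Re e\left(\overline{B_1}A^{-1}X\right)=0$; a direct computation identifies $\overline{B_1}A^{-1}Y$ with $-2\,\transp\overline{D}A^{-1}D\,(a_1,\ldots,a_d)$, so the last condition becomes $\Re e(\transp\overline{D}A^{-1}D)\,a=0$, and $\ddd$ then forces $a=0$, after which the remaining parameters vanish from $X=-Y$ and the explicit formulas. Until you carry out (or cite) this computation, your claim that the ``extra'' solutions surviving under $\ttt$ alone are exactly those annihilated when $\Re e\,B$ is invertible is an assertion rather than a proof. A secondary loose end: $\bm{v}(1)=0$ (the vanishing of the $1$-jet of $\bm{u}=(1-\zeta)\bm{v}$) does not by itself give $\bm{u}\equiv0$; it is the combination of $\bm{v}(1)=0$ with the explicit low-degree polynomial form of the kernel elements and with $\ddd$ that does.
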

\begin{proof}
By the implicit function theorem, it is enough to prove that the restriction of $\mathfrak j_{1}$ to the tangent space $T_{\bm {f_0}}\mathcal{S}_0(Q)$ of $\mathcal{S}_0(Q)$ at the point $\bm {f_0}$ is injective.  By Theorem \ref{theodiscscons}, we have to show that  the  only element of the kernel of the  map $2\Re e \left[\overline{G(\zeta)}\ \cdot \right] : \left(\mathcal{A}^{1,\alpha}_{0}\right)^{2N}
\to  \left(\mathcal C_{0}^{1,\alpha}\right)^{2N}$  with trivial $1$-jet at $\zeta=1$ is the trivial disc.
 Let $\bm{f}=(1-\zeta)(h,g,\tilde{h},\tilde{g})$ be such an element. According to the proof of Lemma 3.4 in \cite{be-bl-me}, we have 
\begin{eqnarray}\label{super}
\begin{cases}
\tilde{g_j}=a_j-\overline{a_j}\zeta, \ a_j \in \C\\
h=A^{-1}(X+Y\zeta),\\
g(\zeta)= -4\Re e \left(\overline{B_1}A^{-1}X\right)+2\overline{B_1}A^{-1}Y-2\overline{B_1}A^{-1}Y\zeta.\\
\end{cases}
\end{eqnarray}
where for $k=1,\ldots,n$, 
\begin{equation} \label{super1}
X_k=2 \sum_{j=1}^d\transp V\overline{(A_j)_k}\Re e (a_j)+\frac{\tilde{y_k}}{2}+i\frac{y_k}{2} \in \C,
\end{equation} 
$\tilde{y_k}, y_k \in \R$ and   
$$Y_k=-2\sum_{j=1}^d \transp V\overline{(A_j)_k}\overline{a_j} \in \C.$$

Since $\bm{f}$ has a trivial $1$-jet at $1$, we must have 
\begin{eqnarray*}
\begin{cases}
a_j \in \R\\
X=-Y\\
\Re e \left(\overline{B_1}A^{-1}X\right)=0.\\
\end{cases}
\end{eqnarray*}  
It follows that 
\begin{eqnarray*}
\overline{B_1}A^{-1}X&=&-\overline{B_1}A^{-1}Y\\
&=&-2 \underbrace{\left(\begin{matrix}
\overline{(A_1)^1 V}& \hdots  &\overline{(A_1)^n V}  \\
\vdots &  & \vdots  \\
\overline{(A_d)^1 V}& \hdots  &\overline{(A_d)^n V}  \\
\end{matrix}\right)}_{\transp\overline{D}}
A^{-1}
\underbrace{\left(\begin{matrix}
 \transp V\overline{(A_1)_1} & \hdots  & \transp V\overline{(A_d)_1} \\
\vdots &  & \vdots  \\
 \transp V\overline{(A_1)_n} & \hdots  & \transp V\overline{(A_d)_n}  \\
\end{matrix}\right)}_{D}
\left(\begin{matrix}
a_1  \\
\vdots  \\

a_d \\
\end{matrix}\right)  \in i\R^d,
 \end{eqnarray*}
 and so 
 $$\Re e (\transp \overline{D}A^{-1}D)\left(\begin{matrix}
a_1  \\
\vdots  \\

a_d \\
\end{matrix}\right)=0.$$ 
It follows from condition $\ddd$ that 
$a_1=\ldots=a_d=0$. This implies, using \eqref{super} that $g=\tilde{g}=0$ and $h=0$. Equation (\ref{super1}) implies that $y_k=\tilde{y_k}=0$. Finally an  inspection of the proof of Lemma 3.4  in \cite{be-bl-me} shows that  $\tilde{h}=0$.  
\end{proof}

\subsection{Filling an open set in the conormal bundle}\label{secfill} 
In \cite{be-bl-me}, we proved that in case the quadric $Q$ is fully nondegenerate at $0$, the family of centers of stationary discs covers an open set of $\C^N$. This relied essentially on the fact that when $V$ is given by condition $\fff$, the matrix $\transp\overline{D}A^{-1}D$ defined above in the proof of Proposition \ref{propjetdiscs} is invertible, which is no longer the case with condition $\ddd$.  Instead of covering an open set of $\C^N$, we show that  the boundaries of the constructed discs cover an open set in the conormal bundle. Proposition \ref{propjetdiscs} is crucial for this approach.
\begin{prop}\label{propfill}
Let $Q\subset \C^N$ be a quadric submanifold of real codimension $d$ given by (\ref{eqquadric}), $\mathfrak{D}$-nondegenerate at $0$. 
Consider an initial disc $\bm{f_0}  \in \mathcal{S}(Q)$ of the form   (\ref{eqinit})
where $V$ and $c=(c_1,\ldots,c_d)$ are respectively given by $\ddd$ and $\ttt$.
Then there exist an open
neighborhood $U$ of $\rho$ in $(\mathcal{C}^4(\B))^{d}$ and a positive $\varepsilon$ such that for all $r\in U$ the set
$$\{e^{-i\theta}.\bm{f}(e^{i\theta})  \ | \ \bm{f} \in \mathcal{S}_0(\{r=0\})\  \|\bm{f}-\bm{f_0}\|_{1,\alpha}<\varepsilon, \ 0\leq \theta<\varepsilon\}$$
covers an open set in the conormal bundle $N^*(\{r=0\})$.

\end{prop}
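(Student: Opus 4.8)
The plan is to view the boundary-evaluation map as a composition of the parametrization $\mathcal{F}_0$ from Theorem \ref{theodiscscons} with an evaluation-and-rotation map, and to show this composite is a submersion onto the conormal bundle at $(\rho,0,0)$. Concretely, from the $\mathcal{C}^1$ map $\mathcal{F}_0 : U\times V \to \mathcal{A}$ of Theorem \ref{theodiscscons}, restricted to the fixed model $r=\rho$, we obtain a $\mathcal{C}^1$ family $v\mapsto \bm{f}_v := \mathcal{F}_0(\rho,v) \in \mathcal{S}_0(Q)$ with $\bm{f}_0 = \bm{f}_0$ and tangent image $T_{\bm{f}_0}\mathcal{S}_0(Q)$ (the kernel of $2\Re e[\overline{G(\zeta)}\,\cdot\,]$). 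Consider the map
\begin{equation*}
\Phi : V\times (-\varepsilon,\varepsilon) \to N^*(Q), \qquad \Phi(v,\theta) = e^{-i\theta}.\bm{f}_v(e^{i\theta}).
\end{equation*}
The target $N^*(Q)$ is a real manifold of dimension $2N+d$ (it fibers over $M$, dimension $2N-d$, with $d$-dimensional conormal fibers), and $\dim V = 2N$. Since adding the $\theta$-direction gives source dimension $2N+1$, a naive count does not immediately give surjectivity; instead I would show that the differential of $\Phi$ at $(0,0)$ already has rank $2N+d$ by combining the $v$-directions with the $\theta$-direction. Once $d\Phi_{(0,0)}$ is surjective, the open mapping theorem (or the rank theorem) gives that $\Phi$ has open image near $\Phi(0,0)=\bm{f}_0(1)=(0,0,0,c/2)\in N^*(Q)$, and the same persists for $r$ in a neighborhood $U$ of $\rho$ by the $\mathcal{C}^1$ dependence in Theorem \ref{theodiscscons}; this yields the claim.

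The key computation is therefore the rank of $d\Phi_{(0,0)}$. First, $\partial\Phi/\partial v$ at $v=0$ sends a tangent vector to its evaluation $\bm{f}(1)$ — but every $\bm{f}\in\mathcal{A}$ has the same value $(0,0,0,c/2)$ at $\zeta=1$, so that naive evaluation is zero; the point of the rotation by $e^{-i\theta}$ is precisely to detach the evaluation point from the fixed base point. One computes
\begin{equation*}
\frac{\partial}{\partial\theta}\Big|_{\theta=0}\Big(e^{-i\theta}.\bm{f}_v(e^{i\theta})\Big)
\quad\text{and}\quad
\frac{\partial}{\partial v}\Big|_{v=0}\Big(e^{-i\theta}.\bm{f}_v(e^{i\theta})\Big),
\end{equation*}
and the span of these, as $v$ ranges over $V$ and at a suitable small $\theta$, must be shown to fill the tangent space to $N^*(Q)$. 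Here is where Proposition \ref{propjetdiscs} enters decisively: it guarantees that the $1$-jet map $\mathfrak{j}_1$ is injective on the disc family, i.e.\ the $\mathcal{C}^1$ map $v\mapsto \mathfrak{j}_1(\bm{f}_v)\in\C^{4N}$ is an immersion at $v=0$, so the $2N$-dimensional family of discs is faithfully recorded by $(\bm{f}_v(1),\partial_\zeta\bm{f}_v(1))$. Since $\bm{f}_v(1)$ is constant, this says $v\mapsto \partial_\zeta\bm{f}_v(1)$ is itself an immersion $\R^{2N}\hookrightarrow\C^{2N}$; combined with the rotation, the value $e^{-i\theta}\bm{f}_v(e^{i\theta})$ at small $\theta$ is, to first order, $\bm{f}_v(1) + \theta(\,\text{something involving }\partial_\zeta\bm{f}_v(1)\,) + \dots$, so varying $v$ moves the output by essentially $\partial_\zeta\bm{f}_v(1)$-directions, which are $2N$-dimensional and, one checks, are transverse to the $\theta$-direction and together span the $(2N+d)$-dimensional tangent to $N^*(Q)$ after accounting for the constraint that $\bm{f}_v$ lies over $M$.

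The main obstacle I anticipate is the dimension bookkeeping: one must carefully identify the tangent space to $N^*(Q)$ at $(0,0,0,c/2)$, track how the constraint $\bm{f}(\partial\Delta)\subset M$ (which cuts $\mathcal{S}_0$ down and is encoded in $\ker 2\Re e[\overline{G}\,\cdot\,]$) interacts with the rotation direction, and verify that the $d$ ``extra'' conormal-fiber directions are genuinely hit — this last point is exactly where $\mathfrak{D}$-nondegeneracy (condition $\ddd$, via the invertibility of $\Re e B$) is needed, mirroring its role in the proof of Proposition \ref{propjetdiscs}. A clean way to organize this is to first prove surjectivity onto $N^*(Q)$ for the model $Q$ itself by the differential argument above, and only then invoke the $\mathcal{C}^1$-in-$r$ statement of Theorem \ref{theodiscscons} to transfer openness to all nearby $\{r=0\}$; the persistence under perturbation is then soft, as submersivity is an open condition.
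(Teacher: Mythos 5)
There is a genuine gap at the heart of your argument. You propose to show that $d\Phi_{(0,0)}$ is surjective and then apply the rank theorem, but this differential is necessarily degenerate: every disc in the family lies in the affine space $\mathcal{A}$ and hence takes the \emph{same} value $(0,0,0,c/2)$ at $\zeta=1$, so $\partial_v\Phi(0,0)=0$ identically and $d\Phi_{(0,0)}$ has rank at most one (the single $\theta$-direction). You notice this vanishing yourself, but then still assert that the differential at $(0,0)$ has full rank ``by combining the $v$-directions with the $\theta$-direction'' --- that combination produces nothing at $\theta=0$. (There is also a bookkeeping slip: $N^*Q$ has real dimension $(2N-d)+d=2N$, not $2N+d$; and condition $\ddd$ is not needed for any extra ``fiber-direction'' argument --- it enters only through Proposition \ref{propjetdiscs}.) The paper resolves exactly this degeneracy with Lemma \ref{lemopen}: after writing $\Psi_\rho(\bm{f},\theta)=\theta\,B(\bm{f},\theta)$ via a first-order Taylor/mean-value expansion in $\theta$, it suffices that $B(\cdot,0)$, i.e. the map $\bm{f}\mapsto(if'(1),i\tilde f'(1))$ on $T_{\bm{f_0}}\mathcal{S}_0(Q)$, be a local diffeomorphism --- which is what Proposition \ref{propjetdiscs} supplies --- and the conclusion that the image of $\theta B(x,\theta)$ contains an open set is obtained by a topological degree/homology covering lemma (Lemma \ref{bog}, after Boggess), not by the inverse function theorem at the base point.

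Your proposal could be repaired without the degree argument by freezing a single small $\theta_0>0$ and applying the inverse function theorem to $v\mapsto\Phi(v,\theta_0)$: since $\dot{\bm f}\mapsto i\dot{\bm f}'(1)$ is injective on the $2N$-dimensional tangent space and injectivity of linear maps is open, $\dot{\bm f}\mapsto e^{-i\theta_0}.\dot{\bm f}(e^{i\theta_0})$ is an isomorphism onto the $2N$-dimensional tangent space of $N^*Q$ for $\theta_0$ small, giving an open image near $\Phi(0,\theta_0)$. But that is a different argument from the one you wrote, and as it stands your proof does not go through.
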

The following lemma is a version of Lemma 3, Section 15.5 in \cite{Bo}.
\begin{lemma}\label{bog}
Let   $X_1, \dots, X_s$ be  linearly independent vectors in $\Bbb R^s,$  
 $F: \Bbb R^s \longrightarrow \Bbb R^s$ be  a continuous map,  $S$ be a subset of $\{\sum_{j=1}^{s}t_jX_j\ |  \ t=(t_1, \dots, t_s), \ \ t_j \ge 0\}.$ 
 Suppose that  there exit $ \epsilon>0, \ \epsilon'>0, \ \eta>0$ such that
 \begin{enumerate}[i.]
 \item
$|F(t)- \sum_{j=1}^{s}t_jX_j| \le \eta |t| ,\ \ t=(t_1, \dots, t_s), \ \ t_j \ge 0,\ \ |t| \le \epsilon'.$ 
 \item  For any $ t=(t_1, \dots, t_s), \ \ t_j \ge 0,\ \ | {t}| \le \epsilon'$ such that $t$ has at least one zero component, the line segment between $\sum_{j=1}^{s} {t_j}X_{j}$
  and $F({t})$ does not intersect  $S\cap B_{\epsilon},$ 
   where $B_{\epsilon}$ is the ball centered at $0$ of radius  $\epsilon.$ 
\end{enumerate}
Then the image of  $F$ contains $S\cap B_{\epsilon}.$ 
\end{lemma}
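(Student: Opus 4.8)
The plan is to reduce the statement to a standard topological degree / Brouwer-type argument. First I would change coordinates so that the linearly independent vectors $X_1,\dots,X_s$ become the standard basis of $\R^s$: let $L\colon\R^s\to\R^s$ be the invertible linear map sending $e_j\mapsto X_j$, so that $\sum_j t_j X_j = L(t)$ and the cone $\{\sum_j t_j X_j\mid t_j\ge 0\}$ becomes $L$ applied to the positive orthant. Setting $\widehat F = L^{-1}\circ F$, hypothesis (i) becomes $|\widehat F(t)-t|\le \eta' |t|$ for $|t|\le\epsilon'$ and $t$ in the positive orthant, where $\eta'$ is controlled by $\eta$ and $\|L^{-1}\|$; similarly $\widehat S=L^{-1}(S)$ lies in the positive orthant, and it suffices to show the image of $\widehat F$ contains $\widehat S \cap L^{-1}(B_\epsilon)$. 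So without loss of generality we may assume $X_j=e_j$ and $S$ is a subset of the positive orthant.

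Next, fix a target point $y \in S\cap B_\epsilon$ (after the reduction, $y$ in the positive orthant). I want to produce $t$ with $t_j\ge 0$, $|t|\le\epsilon'$, and $F(t)=y$. Consider the closed simplex or box $\Sigma = \{t : t_j\ge 0,\ |t|\le \epsilon'\}$ and the homotopy $H(t,\lambda) = \lambda F(t) + (1-\lambda)\, t$ (in the reduced coordinates), $\lambda\in[0,1]$, which interpolates between the identity and $F$. Hypothesis (i) gives $|H(t,\lambda)-t|\le \eta'|t|$ for all $\lambda$, so if $\eta'$ is small relative to $\epsilon/\epsilon'$ the homotopy never reaches $y$ on the part of $\partial\Sigma$ where $|t|=\epsilon'$: there $|H(t,\lambda)|\ge \epsilon' - \eta'\epsilon' $, which exceeds $|y|\le\epsilon$ once $\epsilon$ is taken small enough in the application (this is the role of the constants, and one shrinks $\epsilon$ accordingly). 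On the remaining boundary faces of $\Sigma$ — exactly the points $t$ with at least one zero component — hypothesis (ii) says the segment from $t=\sum t_j X_j$ to $F(t)$, which is precisely $\{H(t,\lambda):\lambda\in[0,1]\}$, does not meet $S\cap B_\epsilon$, hence does not pass through $y$. Therefore $H(\cdot,\lambda)$ omits $y$ on all of $\partial\Sigma$ for every $\lambda$, so $\deg(F,\Sigma,y)=\deg(\mathrm{id},\Sigma,y)=1$ since $y$ is interior to $\Sigma$ (for $\epsilon$ small). A nonzero degree forces a solution $F(t)=y$ with $t\in\Sigma$, which is what we want.

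The main obstacle, and the only point requiring care, is the bookkeeping of the constants: one must check that (i) and (ii), together with the freedom to shrink $\epsilon$ (which is legitimate because in the application $\epsilon$ is the radius we get to choose after $\epsilon',\eta$ are fixed), genuinely guarantee that the homotopy $H(\cdot,\lambda)$ misses the target $y$ on the entire boundary $\partial\Sigma$ — the face $|t|=\epsilon'$ handled by the size estimate (i), and the coordinate faces $t_j=0$ handled by (ii). A secondary, purely technical point is the passage through $L$: one should record that $L$ maps the positive orthant to the $X_j$-cone bijectively and transforms the segment from $\sum t_j X_j$ to $F(t)$ into the segment from $t$ to $\widehat F(t)$, so hypothesis (ii) is preserved verbatim. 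Once these are in place, invariance of Brouwer degree under admissible homotopies closes the argument; alternatively, for readers preferring to avoid degree theory, the same conclusion follows from the fact that a continuous self-map of $\Sigma$ that is a small perturbation of the identity is surjective onto a neighborhood of any interior point, via a standard fixed-point reformulation $t\mapsto t - F(t) + y$.
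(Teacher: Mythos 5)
Your proposal is correct and follows essentially the same route as the paper, which simply defers to Lemma 3 of Section 15.5 in Boggess: the Brouwer-degree homotopy $H(t,\lambda)=\lambda F(t)+(1-\lambda)t$ along the segment from $\sum t_jX_j$ to $F(t)$, with hypothesis (i) controlling the outer spherical part of $\partial\Sigma$ and hypothesis (ii) the coordinate faces, is exactly the homology-of-spheres argument the paper invokes, and you in fact supply more detail than the paper does. The only point worth tightening is your parenthetical that $y$ is interior to $\Sigma$ ``for $\epsilon$ small'': interiority in the orthant directions does not come from shrinking $\epsilon$ but from hypothesis (ii) itself, which (applied at a parameter $t$ with a zero component whose image $\sum t_jX_j$ would lie in $S\cap B_\epsilon$) forces $S\cap B_\epsilon$ to avoid the faces of the cone.
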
 
\begin{proof}The proof of this lemma follows  the proof of Lemma 3,  Section 15.5 in \cite{Bo}. It is based on  properties of the homology groups (of a sphere), using  homotopy and  the fact that the set  $\{\sum_{j=1}^{s}t_jX_j \ | \ t=(t_1, \dots, t_s), \ \ t_j \ge 0\} \cap B_{\epsilon'}$ is contractile to the origin in $\Bbb R^s,$  while the set $\{\sum_{j=1}^{s}t_jX_j, \ t=(t_1, \dots, t_s) \  | \ t_j \ge 0\} \cap \overline{ B_{\epsilon'}}$ restricted to  the   $t$ with at least one zero component 
is homeomorphic to a $(s-1)$-sphere in $\Bbb R^s$ with center $x \in S.$
\end{proof}
\begin{lemma}\label{lemopen}
Let $F: \R^{s}\times \R \to \R^{s}$ be  of the form  $F(x,\theta)=  \theta B(x,\theta),$ where  $B: \R^{s}\times \R \to \R^{s}$ is a continuous function while $B(x,0): \R^{s} \to \R^{s}$ is a $\mathcal{C}^1$ diffeomorphism at $0.$ Then  the image of $F$ contains an open set of $\R^{s}.$
\end{lemma}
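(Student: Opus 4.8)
The plan is to deduce this from Lemma \ref{bog} by a careful choice of the simplex directions $X_1,\dots,X_s$ and the target set $S$. Writing $B_0 := B(\cdot,0)$, the hypothesis says $B_0$ is a $\mathcal{C}^1$ diffeomorphism near $0$; after composing $F$ with the local inverse $B_0^{-1}$ (which only shrinks the image by a diffeomorphism, hence preserves the property of containing an open set) we may assume $B_0 = \mathrm{id}$ near $0$, so that $B(x,\theta) = x + \theta R(x,\theta) + o(1)$-type estimates hold, with $B$ continuous and $B(x,0)=x$. Thus $F(x,\theta) = \theta x + \theta(B(x,\theta)-x)$, and by continuity of $B$, for every $\delta>0$ there is a neighborhood of $(0,0)$ on which $|B(x,\theta)-x|\le\delta$.

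First I would fix a standard basis $e_1,\dots,e_s$ of $\R^s$ and apply Lemma \ref{bog} with $X_j := e_j$ and with $S := \{\,t = (t_1,\dots,t_s) : t_j \ge 0,\ \sum_j t_j = \kappa\,\}$ for a small fixed $\kappa>0$ — that is, $S$ is (a piece of) the simplex slice at height $\kappa$, which is a genuine $(s-1)$-dimensional set, and $S\cap B_\epsilon$ for suitable $\epsilon$ relative to $\kappa$ gives an $(s-1)$-dimensional set; to get an $s$-dimensional open image I would instead let $\kappa$ vary, i.e. run the lemma on the full positive cone and observe that $F$ maps the $s$-parameter family $\{(x,\theta): x\in\text{cone}, \theta\in(0,\epsilon')\}$ onto a set containing $S\cap B_\epsilon$ with $S$ the full solid cone $\{\sum t_j e_j : t_j\ge 0\}$. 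Concretely I parametrize by $\R^s\times\R \ni (x,\theta)$ and reinterpret: set $t_j := \theta x_j$ for $x$ in the positive orthant, so that $\sum_j t_j X_j = \theta x = F(x,\theta) - \theta(B(x,\theta)-x)$; hence condition (i) of Lemma \ref{bog} holds with $|F-\sum t_jX_j| = |\theta|\,|B(x,\theta)-x| \le \delta|\theta| \le \delta'|t|$ once $\delta$ is small, choosing $\eta=\delta'$.

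Next I would verify condition (ii): on the boundary faces of the orthant (some $t_j=0$, equivalently some $x_j=0$), the point $\sum t_j X_j$ lies in a coordinate hyperplane, and the segment to $F(x,\theta)$ stays within distance $\eta|t|$ of it; choosing the target $S$ to be a thin solid sub-cone that stays a definite relative distance away from the coordinate hyperplanes — e.g. $S = \{\sum t_j X_j : t_j \ge c\sum_k t_k\}$ for a small $c>0$ — guarantees that such segments cannot meet $S\cap B_\epsilon$ when $\eta$ is small enough compared to $c$. Then Lemma \ref{bog} yields that the image of $(x,\theta)\mapsto F$ contains $S\cap B_\epsilon$, which is a nonempty open subset of $\R^s$ (a truncated solid cone has nonempty interior). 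Finally I would undo the initial composition with $B_0^{-1}$: the original $F$ equals $(x,\theta)\mapsto \theta B(x,\theta)$ with $B(x,0)=B_0(x)$, and writing $\tilde B(x,\theta) := B(B_0^{-1}(x),\theta)$ we have $\tilde B(x,0)=x$ and the original image is $F(\R^s\times\R) \supseteq \{\theta \tilde B(B_0(y),\theta)\}$; since $B_0$ is a diffeomorphism the set we produced is carried to an open set, so $F(\R^s\times\R)$ contains an open set of $\R^s$.

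The main obstacle I anticipate is bookkeeping the two small parameters so that they are compatible: the deviation bound $|B(x,\theta)-x|\le\delta$ from continuity only holds on a neighborhood of $(0,0)$, so $\epsilon'$ must be taken small first, then $\epsilon$ (the radius of the target ball) even smaller, and the aperture $c$ of the sub-cone $S$ must be chosen large enough relative to the resulting $\eta$ for the non-intersection condition (ii) to hold on the faces — yet $c$ cannot be too large or $S$ degenerates. Making these inequalities consistent is the delicate point, but it is a finite chain of "choose $\delta$ small, then $\eta$ small, then $\epsilon,\epsilon'$ small" and presents no genuine difficulty; the topological content is entirely packaged in Lemma \ref{bog}.
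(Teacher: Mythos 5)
Your strategy---normalize so that $B(\cdot,0)=\mathrm{id}$ and feed the positive orthant of $\R^s$ into Lemma \ref{bog}---is genuinely different from the paper's, which instead augments the map to $G(x,\theta)=(F(x,\theta),\theta)$ and applies Lemma \ref{bog} in $\R^{s+1}$ to the $s+1$ independent vectors $(B(0),1),(\frac{\partial B}{\partial x_1}(0),0),\dots,(\frac{\partial B}{\partial x_s}(0),0)$. The difference is not cosmetic, and it is exactly where your verification of hypothesis (i) breaks down. After your normalization $B(0,0)=0$, you set $t=\theta x$ with $(x,\theta)$ ranging over a neighborhood of $(0,0)$ and claim $|F-\sum_j t_je_j|=|\theta|\,|B(x,\theta)-x|\le\delta|\theta|\le\delta'|t|$. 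But $|t|=|\theta|\,|x|$, so the last inequality forces $|x|\ge\delta/\delta'$, which fails on any neighborhood of $x=0$; equivalently, the bound you actually need is the \emph{relative} one $|B(x,\theta)-x|\le\eta|x|$, and this is false in general because $B(0,\theta)$ need not vanish for $\theta\neq0$ ($B$ is only continuous in $\theta$). In short, the error term $\theta\bigl(B(x,\theta)-x\bigr)$ is $o(\theta)$ but not $o(|t|)$ near the vertex direction $x\to0$ of your cone. The paper's extra coordinate is designed precisely to absorb this degeneracy: for $G$ one has $|t|\ge t_0=\theta$, so an $o(\theta)$ error is automatically $o(|t|)$. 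A second, related defect is that Lemma \ref{bog} requires a map of the $s$ variables $t$, whereas your $F$ depends on the $s+1$ variables $(x,\theta)$ and the substitution $t=\theta x$ is many-to-one; you never specify the section of this substitution to which the lemma is actually applied.

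Both problems are repaired simultaneously by taking the section $x=t/|t|$, $\theta=|t|$, i.e.\ restricting $x$ to the unit sphere of the orthant rather than to a neighborhood of $0$: then $\tilde F(t):=|t|\,B(t/|t|,|t|)$ is a well-defined continuous map of $t$ and satisfies $|\tilde F(t)-t|\le|t|\sup_{|y|=1}|B(y,|t|)-y|=o(|t|)$ by uniform continuity of $B$ on the compact sphere, after which your verification of hypothesis (ii) with a strictly smaller subcone $S$ (taken open, so that $0\notin S$) goes through, and the image of $\tilde F$, hence of $F$, contains the open set $S\cap B_\epsilon$. (You should also note that your normalization $\tilde B(y,0)=y$ is valid for $y$ near $B(0)$, not near $0$, so the case $B(0)\neq0$ needs a word.) With these modifications your argument becomes a correct and arguably more elementary alternative to the paper's proof; as written, however, hypothesis (i) of Lemma \ref{bog} is not established.
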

\begin{proof}
Let 
$G: \R^{s}\times \R \to \R^{s}\times \R$ be the map defined by $G(x, \theta)=(F(x, \theta),\theta).$ Using its  Taylor expansion, we obtain that for every $\epsilon'> 0,$  there exists $\eta> 0 $ such that 
\begin{equation}\label{cato}
|G(x, \theta)-\theta (B(0),1)-\sum_{i=1}^s \theta x_i  \left(\frac{\partial B}{\partial x_i}(0),0\right)|\le \eta |(x,\theta)|,\ \ |(x,\theta)|\le \epsilon'.
\end{equation}
We choose an open cone  $\Gamma < \Gamma_{(B(0),1)(\frac{\partial B}{ \partial x_1}(0),0)\dots (\frac{\partial B}{\partial x_s}(0),0)},$ where  $\Gamma_{(B(0),1)(\frac{\partial B}{ \partial x_1}(0),0)\dots (\frac{\partial B}{\partial x_s}(0),0)}$ is the cone generated by ${(B(0),1),(\frac{\partial B}{ \partial x_1}(0),0),\dots, (\frac{\partial B}{\partial x_s}(0),0)}.$  Following  the proof of Lemma 3 in \cite{Bo},  there exist $\eta'>0$ and $0   <    \epsilon < \epsilon',$  such that the Euclidean distance from $x\in \Gamma\cap B_{\epsilon}$ to the boundary  of $\Gamma_{(B(0),1),(\frac{\partial B}{ \partial x_1}(0),0)\dots (\frac{\partial B}{\partial x_s}(0),0)}\cap  B_{\epsilon'}$ is greater than $ \eta'|x|.$  Choosing $\epsilon'$ small enough in order to have $\eta < < \eta'$  in \eqref{cato}, the assumptions of Lemma \ref{bog} will be satisfied. Therefore we obtain that the image under $G$ contains  $\Gamma\cap B_{\epsilon},$ and  hence the image under $F$ contains an open set of $\R^{s}.$
\end{proof}
In the following proof,  we will be using the notation 
$$u(\Theta)=(u_1(\theta_1),\ldots,u_{4N}(\theta_{4N}))$$
for a vector valued function $u: \R \to \R^{4N}$ and $\Theta=(\theta_1,\ldots,\theta_{4N}) \in \R^{4N}$.  

\begin{proof}[Proof of Proposition \ref{propfill}]
Consider the map 
$$\Psi_r:\{\bm{f} \in \mathcal{S}_0(\{r=0\})\ | \ \|\bm{f}-\bm{f_0}\|_{1,\alpha}<\varepsilon\} \times [0,\varepsilon) \to N^*(\{r=0\})$$
defined by $$\Psi_r({\bm f},\theta)=e^{-i\theta}.{\bm f}(e^{i\theta})=(f(e^{i\theta}),e^{-i\theta}\tilde{f}(e^{i\theta}))$$
In order to prove the proposition, we will show that the  image of 
$\Psi_{\rho}$ covers an open set by using Lemma \ref{lemopen}. We also choose coordinate in $N^*(Q)$ such that $\bm{f_0}(1)=0$.  Note that we write 
$$\Psi_{\rho}({\bm f},\theta)=\bm{f}(1)+\theta\frac{d}{d\theta} \left(e^{-i\theta}.{\bm f}(e^{i\theta})\right) (\Theta)=\theta \frac{d}{d\theta} \left(e^{-i\theta}.{\bm f}(e^{i\theta})\right)(\Theta) $$
for some $\Theta=(\theta_1,\ldots,\theta_{4N}) \in \R^{4N}$ with $0<\theta_j<\theta$, $j=1\ldots,4N$, and hence is of the form  as in Lemma \ref{lemopen}.
We need to prove that the map defined on  $T_{\bm {f_0}}\mathcal{S}_0(Q)$ 
$${\bm f} \mapsto \frac{\partial \Psi_{\rho}({\bm f},0)}{\partial \theta}=\frac{d}{d\theta} \left(e^{-i\theta}.{\bm f}(e^{i\theta})\right)_{|_{\theta=0}}=(if'(1),i\tilde{f}'(1))$$
is a diffeomorphism onto an open set in $T_{\bm {f_0}(1)} N^*Q$.
According to Proposition \ref{propjetdiscs}, the map ${\bm f} \mapsto {\bm f}'(1)$
is injective and so is ${\bm f} \mapsto \frac{d}{d\theta} \left(e^{-i\theta}.{\bm f}(e^{i\theta})\right)_{|_{\theta=0}}$. It follows from Lemma \ref{lemopen}  that 
the image of $\Psi_{\rho}$ covers an open set in  $N^*(Q)$.

\end{proof}

\subsection{Jet determination of CR automorphisms } 
Let $k$ be a positive integer.   
Let $M\subset \C^N$ be a $\mathcal{C}^4$ generic  real submanifold and let $p \in M$. We denote by $Aut^k(M,p)$ the set of germs at $p$  of CR automorphisms $F$ of $M$ of class $\mathcal{C}^k$; in particular we have $F(p)=p$ and $F(M)\subset M$.    
\begin{theo}\label{theojet}
Let $M\subset \C^N$ be a $\mathcal{C}^4$ generic  real submanifold. Assume that $M$ is   $\mathfrak{D}$-nondegenerate at  $p \in M$. Then elements of $Aut^3(M,p)$ are uniquely determined by their $2$-jet at $p$.     
\end{theo}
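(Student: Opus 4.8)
The plan is to reduce the statement about CR automorphisms of $M$ to the rigidity statements about stationary discs that have been established for the quadric model. First I would pass to the coordinates in which $M$ is given by \eqref{eqred}, so that its quadric model $Q$ satisfies $\ttt$ and $\ddd$, and fix an initial nondefective stationary lift $\bm{f_0}$ of the form \eqref{eqinit} with $V$ and $c$ given by $\ddd$ and $\ttt$. An element $F \in Aut^3(M,p)$ is a germ of biholomorphism fixing $p$ with $F(M)\subset M$; since $M$ is only $\mathcal{C}^4$ and $F$ is $\mathcal{C}^3$, I would first argue that $F$ is in fact the restriction of a biholomorphism of a neighborhood of $p$ in $\C^N$ (this is where the genericity and the reflection-type argument enter), so that $F$ acts on analytic discs attached to $M$ and on their cotangent lifts, preserving the set $\mathcal{S}(M)$ of stationary lifts. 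The cotangent action $F^*$ sends $\mathcal{N}M(\zeta)$ to itself fiberwise, hence maps $\mathcal{S}_0(\{r=0\})$ near $\bm{f_0}$ to $\mathcal{S}_0(\{r=0\})$ near $F_*(\bm{f_0})$, and for $F$ close enough to the identity (which is all we need for a jet-determination statement, after a standard normalization reducing to the case $j^2_p F = j^2_p \mathrm{id}$) this image disc is again within the $\varepsilon$-neighborhood covered by Theorem \ref{theodiscscons}.

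The core of the argument is then: suppose $F, F' \in Aut^3(M,p)$ have the same $2$-jet at $p$; I want $F = F'$. Replacing $F'$ by $F^{-1}\circ F'$ I may assume $F'=\mathrm{id}$ and that $F$ has the same $2$-jet at $p$ as the identity, and I must show $F=\mathrm{id}$. The key point is that $F$ acts on the family $\mathcal{S}_0(\{r=0\})$ near $\bm{f_0}$, and by Proposition \ref{propjetdiscs} each such disc is uniquely determined by its $1$-jet at $\zeta=1$. Now $\bm{f_0}(1)=(0,0,0,c/2)$ is a point over $p$, and the $1$-jet of $\bm{f_0}$ at $1$ — that is, $(\bm{f_0}(1), \bm{f_0}'(1))$ — is determined by the $2$-jet of $r$ at $p$ (the matrices $A_j$) together with $V$ and $c$; since $F$ has trivial $2$-jet at $p$ it fixes the relevant $1$-jet data, so $F_*\bm{f_0}$ has the same $1$-jet at $1$ as $\bm{f_0}$, and hence $F_*\bm{f_0}=\bm{f_0}$. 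More generally I would show, using that $F$ preserves $2$-jets at $p$ and acts compatibly with $\mathfrak{j}_1$ (the action on $1$-jets of discs through $p$ is governed by $j^2_p F$), that $F$ fixes the $1$-jet at $\zeta=1$ of every disc in the family, and therefore $F_*\bm{f}=\bm{f}$ for every $\bm{f}\in\mathcal{S}_0(\{r=0\})$ near $\bm{f_0}$.

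Finally I invoke Proposition \ref{propfill}: the set of boundary points $\{e^{-i\theta}.\bm{f}(e^{i\theta})\}$, as $\bm{f}$ ranges over this family and $\theta$ over $[0,\varepsilon)$, covers an open subset of the conormal bundle $N^*(M)$. Since $F^*$ fixes each $\bm{f}$ pointwise, $F^*$ is the identity on an open subset of $N^*(M)$; projecting to $M$, $F$ is the identity on an open subset of $M$, and then $F=\mathrm{id}$ near $p$ because a biholomorphism of a neighborhood in $\C^N$ that is the identity on a totally real submanifold of maximal dimension — or here on an open piece of the generic $M$, whose real span is all of $\C^N$ — must be the identity by uniqueness of holomorphic extension. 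This yields Theorem \ref{theojet}, and Theorem \ref{chloe} is the same statement.

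I expect the main obstacle to be the first step: making precise that a $\mathcal{C}^3$ CR automorphism of the $\mathcal{C}^4$ generic $M$ extends to a genuine local biholomorphism of $\C^N$ and acts on the cotangent lifts preserving stationarity, and — more delicately — pinning down exactly how $j^2_p F$ controls the action of $F$ on the $1$-jets at $\zeta=1$ of the stationary discs in our family, so that trivial $2$-jet really does force $F_*\bm{f}=\bm{f}$ for all $\bm{f}$ in the family rather than just for $\bm{f_0}$. The covering statement (Proposition \ref{propfill}) and the disc-rigidity (Proposition \ref{propjetdiscs}) are already in hand, so once the equivariance bookkeeping is set up the conclusion follows quickly.
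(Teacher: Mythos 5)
Your proposal is correct and follows essentially the same route as the paper: one shows that for $F$ with the $2$-jet of the identity at $p$, the lifted disc $F_*\bm{f}=\bigl(F\circ f,\ \tilde f\,(d_{f(\zeta)}F)^{-1}\bigr)$ has the same $1$-jet at $\zeta=1$ as $\bm{f}$ (all discs of the family pass through $p$ at $\zeta=1$, so this jet is controlled by $j^2_pF$), hence $F_*\bm{f}=\bm{f}$ by Proposition \ref{propjetdiscs}, and then Proposition \ref{propfill} gives that the fixed boundary points $e^{-i\theta}.\bm{f}(e^{i\theta})$ fill an open set of $N^*M$, forcing $F=\mathrm{id}$. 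The technical points you flag (extension/action of $F$ on lifts, and that the argument applies to every disc of the family and not just $\bm{f_0}$) are exactly the ones the paper delegates to the proof of Theorem 4.1 in \cite{be-bl-me}.
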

We sketch the proof which is nearly identical to the one of Theorem 4.1 in \cite{be-bl-me}. Let $F$ be a CR automorphism class $\mathcal{C}^3$ of $M$, fixing $p=0$. Let $\bm{f_0} \in \mathcal{S}_0(Q)$ be a a lift of a stationary disc for $Q$ of the form 
(\ref{eqinit}), where $Q$ is the model quadric approximating $M$. Let $q \in N^*{M}$ close to    $\bm{f_0}(1)$ and let $\bm{f} \in \mathcal{S}_0(M)$ be a lift of stationary disc for 
$M$ obtained in Theorem \ref{theodiscscons} with  $q=e^{-i\theta}.\bm{f}(e^{i\theta})=(f(e^{i\theta}),e^{-i\theta}\tilde{f}(e^{i\theta}))$. Since $F$ has a trivial $2$-jet at $p=0$, the discs 
$F_{*}\bm{f}(\zeta)=\left(F\circ f(\zeta), \tilde{f}(\zeta)\left(d_{f(\zeta)}F\right)^{-1}\right)$ and $\bm{f}$ have the same $1$-jet at $\zeta=1$ and 
therefore coincide according to Proposition \ref{propjetdiscs}. It follows that the lift of $F$,  still denoted by $F$ satisfies $F(q)=q$. Since such points cover an open set in $N^*M$  
by Proposition \ref{propfill}, the map $F$ is the identity.

\section*{Appendix: Jet determination of stationary discs}

Consider the linear jet map 
$$\mathfrak j_{2}: \left(\mathcal{A}^{2,\alpha}\right)^{2N}  \to \mathbb C^{6N}$$ 
mapping ${\bm f}$ to its $2$-jet at $\zeta=1$, namely 
$$\mathfrak j_{2}({\bm f})=\left( {\bm f}(1), \displaystyle \frac{\partial {\bm f}}{\partial \zeta}(1), \displaystyle \frac{\partial^2 {\bm f}}{\partial \zeta^2}(1)\right )\in \mathbb C^{6N}.$$

Let $M$ be a $\mathcal{C}^{4}$ generic  real submanifold of $\C^N$ of codimension $d$  given by  (\ref{eqred}) and satisfying $\ttt$. Consider a lift of stationary disc ${\bm f}=(f,\tilde{f})$ for $M$ satisfying ${\bm f}(1)=(0,\sum_{j=1}^d c_j\partial r_j(0))$ where $\sum_{j=1}^d c_jA_j$  is invertible. It follows from Proposition \ref{propco} and from Chirka (Theorem 33 in \cite{ch}) that such discs are of class $\mathcal{C}^{2,\alpha}$ for any $0<\alpha<1$ near $\zeta=1$. Therefore the map $\mathfrak j_{2}$ is well defined on the space of lifts of stationary discs.
\begin{prop}
Let $Q\subset \C^{N}$ be a quadric submanifold of real codimension $d$ given by (\ref{eqquadric}), satisfying $\ttt$.  Consider an initial disc $\bm{f_0}  \in \mathcal{S}_0(Q)$ of the form   (\ref{eqinit}) where  $c=(c_1,\ldots,c_d)$ is given by 
$\ttt$. Then there exist an open
neighborhood $U$ of $\rho$ in $(\mathcal{C}^4(\B))^{d}$ and a positive $\varepsilon$ such that for all $r\in U$ the map  $\mathfrak j_{2}$ is injective on $\{\bm{f} \in \mathcal{S}(\{r=0\})\ \ | \ \|\bm{f}-\bm{f_0}\|_{1,\alpha}<\varepsilon\}$; in other words, such discs are determined by their $2$-jet at $1$.
\end{prop}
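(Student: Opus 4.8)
The plan is to reduce the statement, via the implicit function theorem and Theorem \ref{theodiscscons}, to a linear-algebra statement about the kernel of $2\Re e[\overline{G(\zeta)}\,\cdot\,]$ on $\left(\mathcal{A}^{1,\alpha}_{0}\right)^{2N}$, and then to carry out exactly the same analysis as in the proof of Proposition \ref{propjetdiscs}, but stopping one order later in the jet. Concretely, it suffices to show that the only element $\bm{f}\in T_{\bm{f_0}}\mathcal{S}_0(Q)$ with trivial $2$-jet at $\zeta=1$ is the zero disc: indeed, the one-to-one correspondence in Theorem \ref{theodiscscons} transports injectivity of $\mathfrak{j}_2$ on the tangent space to local injectivity on the manifold of stationary discs for the perturbed $r\in U$, uniformly in $r$ by continuity of $\mathcal{F}_0$.

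First I would recall the explicit form \eqref{super} of a tangent vector $\bm{f}=(1-\zeta)(h,g,\tilde h,\tilde g)$ coming from the proof of Lemma 3.4 in \cite{be-bl-me}: $\tilde{g}_j=a_j-\overline{a_j}\zeta$, $h=A^{-1}(X+Y\zeta)$, and $g$ a degree-one polynomial in $\zeta$ with coefficients built from $B_1$, $A^{-1}$, $X$, $Y$, together with \eqref{super1} and the formula for $Y_k$. Note crucially that every component of $(h,g,\tilde h,\tilde g)$ is a polynomial of degree at most $1$ in $\zeta$, so after multiplying by $(1-\zeta)$ the disc $\bm{f}$ has each component of degree at most $2$. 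Hence the hypothesis that $\mathfrak{j}_2(\bm{f})=0$ — vanishing of $\bm{f}$, $\bm{f}'$ and $\bm{f}''$ at $\zeta=1$ — forces $\bm{f}\equiv 0$ once one knows $\bm{f}(1)=0$ and $\bm{f}'(1)=0$: a polynomial of degree $\le 2$ vanishing to order $3$ at a point is identically zero. So in fact, for these very special discs, $2$-jet determination is automatic from $\bm{f}(1)=0$ and $\bm{f}'(1)=0$ alone. The step that genuinely uses the structure of the quadric is extracting, from $\bm{f}(1)=0$, the relations $a_j\in\R$, $X=-Y$, and $\Re e(\overline{B_1}A^{-1}X)=0$ exactly as in Proposition \ref{propjetdiscs}; but here we are \emph{not} allowed to invoke $\ddd$, so we cannot conclude $a_j=0$ from $\Re e(\transp\overline{D}A^{-1}D)a=0$.

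This is where the argument must diverge from Proposition \ref{propjetdiscs}, and I expect it to be the main obstacle: without condition $\ddd$, the equation $\Re e(\transp\overline{D}A^{-1}D)a=0$ may have nonzero solutions, so $\bm{f}(1)=0$ alone does not kill the disc — we genuinely need the first-derivative condition $\bm{f}'(1)=0$ as well. Using $\bm{f}=(1-\zeta)(h,g,\tilde h,\tilde g)$, one has $\bm{f}'(1)=-(h(1),g(1),\tilde h(1),\tilde g(1))$, so $\bm{f}'(1)=0$ is equivalent to $(h,g,\tilde h,\tilde g)$ vanishing at $\zeta=1$. Feeding this into \eqref{super}: $\tilde g(1)=0$ gives $a_j-\overline{a_j}=0$ again (consistent), $h(1)=A^{-1}(X+Y)=0$ gives $X+Y=0$ (again consistent with $X=-Y$), and one must check $g(1)=0$ and the analogue for $\tilde h$. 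Since $g(1)=-4\Re e(\overline{B_1}A^{-1}X)+2\overline{B_1}A^{-1}Y-2\overline{B_1}A^{-1}Y = -4\Re e(\overline{B_1}A^{-1}X)$, which we already know vanishes, the new content is minimal — so I would expect that the pair of conditions $\bm{f}(1)=0,\ \bm{f}'(1)=0$ still does \emph{not} obviously force $a=0$ without $\ddd$, and that one must instead use the full $2$-jet: combining $\bm{f}''(1)=0$ with the degree-$\le 2$ observation to conclude $\bm{f}\equiv 0$ directly, hence in particular $\tilde g\equiv 0$, i.e. $a_j-\overline{a_j}\zeta\equiv 0$, forcing $a_j=0$, then $X=Y=0$, then $y_k=\tilde y_k=0$ from \eqref{super1}, and finally $\tilde h\equiv 0$ by inspecting the proof of Lemma 3.4 in \cite{be-bl-me} as before. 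I would present the argument in this order: (1) reduce to the tangent space via Theorem \ref{theodiscscons}; (2) write the general tangent vector via \eqref{super}, noting each component is a polynomial of degree $\le 2$ in $\zeta$; (3) observe that a degree-$\le 2$ polynomial disc with vanishing $2$-jet at $\zeta=1$ is identically zero; (4) read off $\tilde g\equiv 0 \Rightarrow a=0$, then $h\equiv 0,\ g\equiv 0 \Rightarrow X=Y=0 \Rightarrow y_k=\tilde y_k=0$, and $\tilde h\equiv 0$; (5) invoke the uniform implicit function theorem to pass from $Q$ to all $r\in U$. The only subtlety worth flagging is that one should double-check that the $\tilde h$-component is also polynomial of degree $\le 2$ in $\zeta$ (which follows from the same Lemma 3.4 computation), so that step (3) applies to the full $2N$-tuple and not merely to the $(h,g,\tilde g)$-block.
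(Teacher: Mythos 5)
Your proposal is correct, but it reaches the key linear step by a different route than the paper. The paper's appendix proof is deliberately self-contained: it goes back to the kernel equation $\overline{G(\zeta)}\bm{f}+G(\zeta)\overline{\bm{f}}=0$, kills $\tilde{g}$ directly from the last $d$ rows (where $\overline{\zeta}\tilde{g_j}$ real on $\partial\Delta$ forces $\tilde{g_j}=a_j+b_j\zeta+\overline{a_j}\zeta^2$), and then handles the $(h,\tilde h)$ block by showing that all partial indices of $-\overline{G_2^{-1}}G_2$ equal $1$, via Globevnik's factorization with an explicit $\Theta$, so that each $h_k,\tilde h_k$ is a polynomial of degree one; the first $d$ rows then give $g\equiv 0$. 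You instead import the explicit kernel parametrization \eqref{super}--\eqref{super1} from the proof of Lemma 3.4 of \cite{be-bl-me} and observe that every component of a tangent vector is $(1-\zeta)$ times a degree-$\le 1$ polynomial, hence a polynomial of degree $\le 2$, which a trivial $2$-jet at $\zeta=1$ annihilates outright. Both arguments rest on the same underlying fact (under $\ttt$ alone, tangent vectors are polynomial discs of degree $\le 2$ per component), and your instinct to flag the $\tilde h$-component is the right one: \eqref{super} as displayed omits $\tilde h$, and its degree bound is exactly what the paper's partial-index computation supplies; you should also note explicitly that the derivation of \eqref{super} uses only the invertibility of $A$ (condition $\ttt$), not $\ddd$, since in the body of the paper it is quoted inside a proof where $\mathfrak{D}$-nondegeneracy is assumed. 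What your route buys is brevity and a transparent reason why the $2$-jet (rather than the $1$-jet) is the right order without $\ddd$; what the paper's route buys is independence from the earlier kernel computation. One sentence of yours is a slip --- ``$2$-jet determination is automatic from $\bm{f}(1)=0$ and $\bm{f}'(1)=0$ alone'' is false, since $c(1-\zeta)^2$ has trivial $1$-jet at $1$ --- but you correctly walk this back in the next paragraph, and your final five-step outline is sound.
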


\begin{proof}
By the implicit function theorem, it is enough to prove that the restriction of $\mathfrak j_{2}$ to the tangent space $T_{\bm {f_0}}\mathcal{S}_0(Q)$ of $\mathcal{S}_0(Q)$ at the point $\bm {f_0}$ is injective.  By Theorem \ref{theodiscscons}, we have to show that  the  only element of the kernel of the  map $2\Re e \left[\overline{G(\zeta)}\ \cdot \right] : \left(\mathcal{A}^{1,\alpha}_{0}\right)^{2N}
\to  \left(\mathcal C_{0}^{1,\alpha}\right)^{2N}$  with trivial $2$-jet at $\zeta=1$ is the trivial disc. 
Let $\bm{f}$ be such an element. We have 
\begin{equation}\label{eqker}
\overline{G(\zeta)}\bm{f}+G(\zeta)\overline{\bm{f}}=0.
\end{equation}
The last $d$ rows of that equation are of the form 
$$i\overline{\zeta} \tilde{g_j}(\zeta)-i\zeta \overline{\tilde{g_j}(\zeta)}=0,  \ j=1,\ldots,n,$$
 from which it follows that 
$$\tilde{g_j}=a_j+b_j\zeta+\overline{a_j}\zeta^2, \ a_j \in \C, \ b_j \in \R.$$
Since $\tilde{g_j}$ has a trivial two jet it follows that $\tilde{g_j}\equiv  0$. Solving the system backward, 
the previous $2n$ rows of Equation (\ref{eqker}) are of the form
 \begin{equation*}
\overline{G_2(\zeta)}\left(\begin{matrix}
h  \\
 \tilde{h}\\
\end{matrix}\right) 
+G_2(\zeta)\left(\begin{matrix}
\overline{h}  \\
 \overline{\tilde{h}}\\
\end{matrix}\right) =0.
\end{equation*}
The fact that $h \equiv \tilde{h}\equiv 0$ is a consequence of the fact that all partial indices of $-\overline{G_2^{-1}}G_2$ are $1$. Indeed, 
following the operations that lead to the introduction of  $G_2^{\flat}$ (see Equation (2.11) in \cite{be-bl-me}), we end up with $n$ systems 
 \begin{equation*}
\overline{R(\zeta)}
\left(\begin{matrix}
h_k  \\
 \tilde{h_k}\\
\end{matrix}\right) 
+R(\zeta)\left(\begin{matrix}
\overline{h_k}  \\
 \overline{\tilde{h_k}}\\
\end{matrix}\right) =0.
\end{equation*}
with the abuse of notation $h=\transp Ah$ - A being invertible -, or equivalently 
\begin{equation*}
\left(\begin{matrix}
h_k  \\
 \tilde{h_k}\\
\end{matrix}\right) 
=-\overline{R(\zeta)^{-1}}R(\zeta)\left(\begin{matrix}
\overline{h_k}  \\
 \overline{\tilde{h_k}}\\
\end{matrix}\right) = -\left(\begin{matrix}
0 & \zeta       \\
\zeta & 0  \\
\end{matrix}\right)\left(\begin{matrix}
\overline{h_k}  \\
 \overline{\tilde{h_k}}\\
\end{matrix}\right) =0.
\end{equation*}
According to  Lemma 5.1 in \cite{gl1} we can write  
\begin{equation*}
-\left(\begin{matrix}
0 & \zeta       \\
\zeta & 0  \\
\end{matrix}\right) = \Theta^{-1} \left(\begin{matrix}
\zeta & 0       \\
0 & \zeta  \\
\end{matrix}\right) \overline \Theta
\end{equation*}
where $\Theta: \overline{\Delta} \to Gl_{2}(\C)$ is a smooth map holomorphic on $\Delta$; more explicitly one can take
for instance 
\begin{equation*}
\Theta= \left(\begin{matrix}
1 & -1       \\
i & i  \\
\end{matrix}\right).
\end{equation*}
 Therefore 
$$\Theta  \left(\begin{matrix}
h_k  \\
\tilde{h_k}\\
\end{matrix}\right) =\left(\begin{matrix}
\zeta & 0       \\
0 & \zeta  \\
\end{matrix}\right) \overline{\Theta \left(\begin{matrix}
h_k  \\
\tilde{h_k}\\
\end{matrix}\right) }.$$
It follows that each of the two components of  $\Theta \left(\begin{matrix}
h_k  \\
\tilde{h_k}\\
\end{matrix}\right)$ is a polynomial of degree one, and so are $h_k$ and $\tilde{h_k}$ (due to the explicit form of $\Theta$).  Since they have a trivial 2-jet at $1$, this implies that  $h_k\equiv \tilde{h_k} \equiv 0$, $k=1,\ldots,n$, and thus $h \equiv \tilde{h} \equiv 0$. 
Finally the first $d$ equations implies directly hat $g \equiv 0$.
\end{proof}


\vskip 1cm
{\small
\noindent Florian Bertrand\\
Department of Mathematics,\\
American University of Beirut, Beirut, Lebanon\\
{\sl E-mail address}: fb31@aub.edu.lb\\

\noindent Francine Meylan \\
Department of Mathematics\\
University of Fribourg, CH 1700 Perolles, Fribourg\\
{\sl E-mail address}: francine.meylan@unifr.ch\\
} 

\end{document}